\newtheorem{thm}{Theorem}[section]
\newtheorem{defn}[thm]{Definition}
\newtheorem{cor}[thm]{Corollary}
\newtheorem{lem}[thm]{Lemma}
\newtheorem{exa}[thm]{Example}
\newtheorem{rem}[thm]{Remark}
\newtheorem{con}[thm]{Conjecture}
\begin{document}	

\bibliographystyle{plain}
	
\author{Seyed Mohammad Amin Khatami
\footnote{2010 \emph{Mathematics Subject Classification:} 03B50, 03B52, 03G25. Key words and phrases: {\g logic, \lo logic, totally ordered Abelian group.}}
}
\title{Additive G\"{o}del Logic}
\date{2017/11/08}
	
\maketitle

\begin{abstract}
We study an extension of \g propositional logic whose corresponding algebra is an ordered Abelian group. Then we expand the ideas to first-order case of this logic.
\end{abstract}

\section{Introduction}
Extending model-theoretic techniques from classical model theory to other logics is a fashionable trend. The merit of this trend is twofold. Firstly, it can be viewed as a measurement for complexity of semantical aspects of a given logic and, secondly, can be used as an instrumental tool to verify certain fundamental logical questions. Following this, the present paper can been as further development initiated in \cite{khatami2014rational} for studying model-theoretic aspects of extensions of first-order  G\"odel Logic. While in \cite{khatami2014rational}, the first-order G\"odel Logic is enriched by adding countably many nullary logical constants for rational numbers, here we extend it in other way by adding a group structure on the set of truth values. This extension enables us to strengthen considerably the expressive power of the G\"odel Logic. On the other hand, we will see that this strengthening does not prevent us to have nice model-theoretic properties. Therefore, this extension enjoys a balance between the expressive power, on one hand, and nice model-theoretic properties, on the other hand.

The additive \g logic not only involve the \g logic but also it includes the \lo logic. So, this logic can be viewed as a common non-trivial extension of both of \g and \lo logic. Here by non-trivial we mean that this logic is not a boolean logic \cite[Section 4.3]{hajek98}. We
noted that the common extensions of known fuzzy logics are extensively studied by some authors \cite{esteva2009,esteva2007adding,esteva2006,cintula2009distinguished,EGM2011,cintula2003advances,cintula2001l,esteva2001pi,horvcik2004product,esteva2000}. Also in
some papers, basic model-theoretic notions of fuzzy logics are studied \cite{hajek2006theories,dellunde2013elementary,dellunde2012preserving,dellunde2010elementary,novak2003model,benusthenbre08}.

This paper organized as follows. Next section, is devoted to introducing basic syntactical and semantical aspects of additive  G\"odel Logic. In third section, we show that the first-order additive G\"odel Logic satisfies the compactness theorem. The crucial compactness theorem would allow us to develop some model theory for this logic and prove that the additive G\"odel logic satisfies the joint Robinson consistency theorem, and furthermore it is shown that the
class of (ultrametric exhaustive) models of a first-order theory $T$
with respect to elementary substructure forms an abstract elementary class..

\label{intro}
\section{Additive G\"odel Logic }\label{section additive godel logic}
Fuzzy logics are usually enjoy a semantic based on the unite interval of
real numbers. However, the truth value set could be
considered as any linear ordered structure. In this paper we want to work with a fuzzy logic, whose
semantic is based on totally ordered Abelian groups.

The primary logical connectives of propositional additive \g logic, AG, are
\g implication $\to$, \g conjunction $\wedge$,
group conjunction $\oplus$, and group negation $\ominus$ together with the truth-constants $\bot$ and $\mathfrak{e}$.
Some of further connectives are defined as follows.
\begin{eqnarray*}
  0\varphi&:=&\mathfrak{e}\\
  n\varphi&:=&(n-1)\varphi\oplus\varphi\\
  \varphi\vee\psi &:=& ((\varphi\to\psi)\to\psi)\wedge((\psi\to\varphi)\to\varphi)\\
  \neg\varphi &:=& \varphi\to\bot\\
  \varphi\leftrightarrow\psi &:=& (\varphi\to\psi)\wedge(\psi\to\varphi)\\
  \top &:=& \neg\bot \\
\end{eqnarray*}
The semantic of additive \g logic is based on totally ordered Abelian groups.
\begin{defn}\label{gammag}
Let $(G, +, 0_G, \le)$ be a totally ordered Abelian group.
Set $\Gamma_G=G\cup\{\bar{\infty},\bar{\propto}\}$, and let
\begin{itemize}
\item[] $\bar{\infty} + \bar{\propto}=\bar{\propto} + \bar{\infty}=0_G$,
\item[] for all $a\in G$, $a + \bar{\infty}=\bar{\infty} + a=\bar{\infty}$ and
$a + \bar{\propto}=\bar{\propto} + a=\bar{\propto}$,
\item[] $-\bar{\propto}=\bar{\infty}$ and $-\bar{\infty}=\bar{\propto}$.
\end{itemize}
Extend the order $\le$ on $\Gamma_G$ such that $\bar{\propto}$ and $\bar{\infty}$ be the least and largest
elements of $\Gamma_G$.
\end{defn}
For a given totally ordered Abelian group $G$, we consider $\Gamma_G$ as
the set of truth values, whereas $\bar{\propto}$ is the absolute falsity and $\bar{\infty}$
is the absolute truth. Define the following operators on $\Gamma_G$.
\begin{eqnarray*}
a\dotv b&=&\max\{a,b\}\\
a\dota b&=&\min\{a,b\}\\
a\dotto b&=&\left\{\begin{array}{cc}
\bar{\infty}&~~~~a\le b\\
b&~~~~a>b
\end{array}\right.\\
d_{min}(a,b)&=&\left\{\begin{array}{cc}
\min\{a,b\}&~~~~a\ne b\\
\bar{\infty}\hfill&~~~~a=b
\end{array}\right.
\end{eqnarray*}
Now, for a given totally ordered Abelian group $(G,+,0_G,\le)$ and truth-value set
$\Gamma_G=G\cup\{\bar{\infty},\bar{\propto}\}$, a $G$-evaluation is any mapping
$v$ assigning to each propositional variable $p$ an element $e(p)\in \Gamma_G$ which extends
to all propositionals by setting
\begin{itemize}
\item $v(\bot)=\bar{\propto}$,
\item $v(\mathfrak{e})=0_G$,
\item $v(\varphi\oplus\psi)=v(\varphi)+v(\psi)$,
\item $v(\ominus\varphi)=-v(\varphi)$,
\item $v(\varphi\wedge\psi)=v(\varphi)\dota v(\psi)$,
\item $v(\varphi\to\psi)=v(\varphi)\dotto v(\psi)$,
\end{itemize}
and so we have $v(\top)=\bar{\infty}$, $v(\varphi\vee\psi)=v(\varphi)\dotv v(\psi)$, and
$v(\varphi\leftrightarrow\psi)=d_{min}\big(v(\varphi),v(\psi)\big)$.
\begin{exa}
Let $f:(0,1)\to\mathbb{R}$ is defined by $f(x)=\tan(\pi x-\ds\frac{\pi}{2})$.
For each $x,y\in (0,1)$ define
\begin{center}
$x\star y=f^{-1}\big(f(x)+f(y)\big)$.
\end{center}
One could easily verify that $(0,1)_{AG}=\big((0,1), \star, \ds\frac{1}{2}, \le\big)$
is a totally ordered Abelian group. Now, by setting $\bar{\infty}=1$ and $\bar{\propto}=0$ we
have the truth value set $\Gamma_{[0,1]}=(0,1)_{AG}\cup\{0,1\}$ in which for every $a,b\in[0,1]$,
$a\dotto b=\left\{\begin{array}{cc}
1&~~a\le b\\
b&~~a>b
\end{array}\right.$. Thus
$v(\varphi\to\psi)=\left\{\begin{array}{cc}
1&~~v(\varphi)\le v(\psi)\\
b&~~v(\varphi)> v(\psi)
\end{array}\right.$ which is the standard \g implication. Furthermore
$v(\ominus\varphi)=1-v(\varphi)$ which is the standard \lo negation. On the other hand if we set
$\varphi\rightsquigarrow\psi:=\varphi\oplus(\ominus\psi)$, then
$v(\varphi\rightsquigarrow\psi)=v(\psi)\oplus \big(1-v(\psi)\big)$. Now, one can easily see that
the truth function of $\rightsquigarrow$ is a continuous map a bit similar to the \lo implication.
Indeed if $v(\varphi)\le v(\psi)$, then $v(\varphi\rightsquigarrow\psi)\ge\ds\frac{1}{2}$ and the
absolute truth (absolute falsity) of $\rightsquigarrow$ is only take place for
$\bot\rightsquigarrow\top$ ($\top\rightsquigarrow\bot$).
\end{exa}
\begin{exa}
$\Gamma_{\mathbb{R}+}=(\mathbb{R}, +, 0, \le)\cup\{-\infty,\infty\}$ with the usual
meaning of sum and order in real numbers, is the standard truth value set for AG. However
one might use the multiplicative representation
$\Gamma_{\mathbb{R}\times}=(\mathbb{R}^{>0}, ., 1, \le)\cup\{0,\infty\}$ with the usual
meaning of product and order in real numbers.
\end{exa}
We now present the axioms of propositional additive \g logic. The first seven from an axiomatizations for propositional \g logic \cite{hajek98}.
\begin{itemize}[leftmargin=1cm]
\item[(G1)] $(\varphi \rightarrow \psi)\rightarrow ((\psi\rightarrow \chi)\rightarrow(\varphi\rightarrow
\chi))$
\item[(G2)] $(\varphi\wedge\psi)\rightarrow \varphi$
\item[(G3)] $(\varphi\wedge\psi)\rightarrow(\psi\wedge\varphi)$
\item[(G4)] $\varphi\rightarrow(\varphi\wedge\varphi)$
\item[(G5)] $(\varphi\rightarrow(\psi\rightarrow\chi))\leftrightarrow((\varphi\wedge\psi)\rightarrow
\chi)$
\item[(G6)] $((\varphi\rightarrow\psi)\rightarrow\chi)\rightarrow(((\psi\rightarrow\varphi)\rightarrow\chi)\rightarrow\chi)$
\item[(G7)] $\bot\rightarrow\varphi$
\end{itemize}
The next five axioms  of additive \g logic say that $\oplus$, $\ominus$ and $\mathfrak{e}$ behaves such that

\begin{rem}
Writing $G$ multiplicatively together with assuming a similarity
relation leads to obtain a pseud ultrametric. So we choose multiplicative
notion for totally ordered Abelian groups.
\end{rem}

As in classical first-order logic, we work with first-order languages.
Firstly, we introduce an extension of \g logic, named additive \g logic.
\begin{defn}
The first-order additive \g logic, AG$\forall$,
consists of the following logical symbols:
\begin{enumerate}
  \item Logical connectives $\wedge$, $\to$ , $\otimes$, $^{-1}$, $\bar{1}$ and $\bot$.
  \item Quantifiers $\forall$ and $\exists$.
  \item A countable set of variables $\{x_n\}_{n\in\mathbb{N}}$.
\end{enumerate}
First-order languages are defined the same as classical first-order logic and
are considered as non-logical symbols of AG$\forall$. So a language $\tau$ is a
set
\begin{center}
$\tau=\left\{\{(f_i,n_{f_i})\}_{i\in I},\{(P_j,n_{P_j})\}_{j\in J}\right\}$
\end{center}
in which for every $i\in I$, $f_i$ is a function symbol of
arity $n_{f_i}\ge 0$ and for each $j\in J$, $P_j$ is a predicate symbol of arity $n_{P_j}\ge 0$.
A nullary function symbol is commonly called a constant symbol.
\end{defn}
For a given first-order language $\tau$, the usual
definition of $\tau$-terms and (atomic) $\tau$-formulas
are considered. Free and bound variables defined as in
classical first-order logic. A $\tau$-sentence is
a $\tau$-formula without free variable. The set of
$\tau$-formulas and $\tau$-sentences are denoted by
$Form(\tau)$ and $Sent(\tau)$, respectively. When
there is no danger of confusion we may omit the prefix $\tau$
and simply write a term, (atomic) formula or sentence. A theory is a
set of sentences.

Further connectives are defined as follows.
\begin{eqnarray*}
  \varphi^1&:=&\varphi\\
  \varphi^n&:=&\varphi^{n-1}\otimes\varphi\\
  \varphi\vee\psi &:=& ((\varphi\to\psi)\to\psi)\wedge((\psi\to\varphi)\to\varphi)\\
  \neg\varphi &:=& \varphi\to\bot\\
  \varphi\leftrightarrow\psi &:=& (\varphi\to\psi)\wedge(\psi\to\varphi)\\
  \top &:=& \neg\bot \\
  \varphi\Rightarrow\psi &:=& (\psi\to\varphi)\to\psi\\
  \varphi\Rrightarrow\psi &:=& ((\varphi\Rightarrow\psi)\wedge\neg\neg\psi^{-1})\vee(\psi\wedge\neg\neg\varphi^{-1})\\
  \Delta(\varphi)&:=&\neg(\varphi\Rrightarrow T)\\
  \varphi\to_{_L}\psi&:=&\bar{1}\to(\psi\otimes\varphi^{-1})
\end{eqnarray*}

\begin{defn}
For a given language $\tau$, a $\tau$-\emph{structure} $\mathcal{M}$ is a
nonempty set M called the universe of $\mathcal{M}$ together with:
  \begin{itemize}
  \item[a)] a totally ordered Abelian group $(G, *, \le)$ with identity element $1_G$ or the empty set,
  \item[b)] for any n-ary predicate symbol $P$ of $\tau$, a function $P^{\mathcal{M}}:M^n\to\Gamma_G$, while
  for nullary predicate symbol, $P^\mathcal{M}$ is an element of $\Gamma_G$,
  \item[c)] for any n-ary function symbol $f$ of $\tau$, a function $f^{\mathcal{M}}:M^n\to M$, while in
  the case of nullary function symbol, $f^\mathcal{M}$ is an element of $M$.
  \end{itemize}
\end{defn}
We may call $\mathcal{M}$ a $\tau^G$-structure. Sometimes $\mathcal{M}$ is denoted by
$\mathcal{M}=(G,M)$. When there is no fear of confusion, we may omit the underlying language
$\tau$ and the group symbols $G$, and call $\mathcal{M}$ a structure. A structure whose underlying
group is a totally ordered subgroup of $(\mathbb{R}^{>0},.,1)$ is called a standard structure.

For each $\alpha\in\tau$, $\alpha^\mathcal{M}$ is called the
\emph{interpretation} of $\alpha$ in $\mathcal{M}$. The interpretation of
terms defined inductively as follows.
\begin{defn}
For every n-tuple $\bar{x}=x_1, x_2, ..., x_n$ and every term $t(\bar{x})$,the interpretation of $t(\bar{x})$ in
$\mathcal{M}$ is a function $t^\mathcal{M}:M^n\to M$ such that
\begin{enumerate}
\item if $t(\bar{x})=x_i$ for $1\le i\le n$, then $t^\mathcal{M}(\bar{a})=a_i$,
\item if $t(\bar{x})=f(t_1(\bar{x}),...,t_m(\bar{x}))$ then $t^\mathcal{M}(\bar{a}) = f^\mathcal{M}(t_1^\mathcal{M}(\bar{a}),...,t_m^\mathcal{M}(\bar{a}))$.
\end{enumerate}
\end{defn}
Similarly, the interpretation of formulas in structures is defined as follows.
\begin{defn}
The interpretation of a formula $\varphi(\bar{x})$ in a $\tau^G$-structure $\mathcal{M}$
is a function $\varphi^\mathcal{M}:M^n\to\Gamma_G$ which is inductively determined as follows.
\begin{enumerate}
\item $\bot^\mathcal{M}=0$, $\top^\mathcal{M}=\infty$ and $\bar{1}=1_G$.
\item For every n-ary predicate symbol $P$,
\begin{center}
$P^\mathcal{M}(t_1(\bar{a}),...,t_n(\bar{a}))=P^\mathcal{M} (t_1^\mathcal{M}(\bar{a}),...,t_n^\mathcal{M}(\bar{a}))$.
\end{center}
\item $(\varphi \wedge \psi)^\mathcal{M}(\bar{a}) = \varphi^\mathcal{M}(\bar{a}) \dota \psi^\mathcal{M}(\bar{a})$.
\item $(\varphi \to \psi)^\mathcal{M}(\bar{a}) =\varphi^\mathcal{M}(\bar{a})\dotto\psi^\mathcal{M}(\bar{a})$.
\item $(\varphi \otimes\psi)^\mathcal{M}(\bar{a}) = \varphi^\mathcal{M}(\bar{a}) * \psi^\mathcal{M}(\bar{a})$.
\item $(\varphi^{-1})^\mathcal{M}(\bar{a}) = (\varphi^\mathcal{M}(\bar{a}))^{-1}$.
\item if $\varphi(\bar{x})=\forall y\ \psi(y,\bar{x})$ then $\varphi^\mathcal{M}(\bar{a})=\inf_{b\in M}\{\psi^\mathcal{M}(b,\bar{a})\}$.
\item if $\varphi(\bar{x})=\exists y\ \psi(y,\bar{x})$ then $\varphi^\mathcal{M}(\bar{a})=\sup_{b\in M}\{\psi^\mathcal{M}(b,\bar{a})\}$.
\end{enumerate}
The suprema and infima may not be exist. When all suprema and infima
exist in an structure $\mathcal{M}$, we call $\mathcal{M}$ a safe
structure. We assume all structures to be safe hereafter.
One could easily verify that:
\begin{eqnarray*}
(\varphi \vee\psi)^\mathcal{M}(\bar{a})&=&\varphi^\mathcal{M}(\bar{a}) \dotv \psi^\mathcal{M}(\bar{a}),\\
(\varphi \leftrightarrow \psi)^\mathcal{M}(\bar{a})&=&d_{min}(\varphi^\mathcal{M}(\bar{a}),\psi^\mathcal{M}(\bar{a})),\\
(\neg\neg\varphi)^\mathcal{M}(\bar{a})&=&\left\{\begin{array}{cc}
\infty&~~~~~\varphi^\mathcal{M}(\bar{a})>0,\\
0&~~~~~\varphi^\mathcal{M}(\bar{a})=0,
\end{array}\right.\\
(\varphi \Rightarrow \psi)^\mathcal{M}(\bar{a})&=&\left\{\begin{array}{cc}
\infty\hfill&\varphi^\mathcal{M}(\bar{a})<\psi^\mathcal{M}(\bar{a})<\infty,\\
\psi^\mathcal{M}(\bar{a})&otherwise,
\end{array}\right.\\
(\varphi \Rrightarrow \psi)^\mathcal{M}(\bar{a})&=&\left\{\begin{array}{cc}
\infty\hfill&\varphi^\mathcal{M}(\bar{a})<\psi^\mathcal{M}(\bar{a}),\\
0\hfill&\varphi^\mathcal{M}(\bar{a})=\psi^\mathcal{M}(\bar{a})=\infty,\\
\psi^\mathcal{M}(\bar{a})&otherwise,
\end{array}\right.\\
(\Delta(\varphi))^\mathcal{M}(\bar{a})&=&
\left\{\begin{array}{cc}
\infty&~~~~\varphi^\mathcal{M}(\bar{a})=\infty,\\
0&otherwise,
\end{array}\right.\\
(\varphi \to_{_L} \psi)^\mathcal{M}(\bar{a})&=&\left\{\begin{array}{cc}
\infty\hfill&\varphi^\mathcal{M}(\bar{a})\le\psi^\mathcal{M}(\bar{a}),\\
\psi^\mathcal{M}(\bar{a})*(\varphi^\mathcal{M}(\bar{a}))^{-1}&otherwise.
\end{array}\right.
\end{eqnarray*}
\end{defn}
\begin{rem}
The truth functionality of $\to$, $\leftrightarrow$, and $\wedge$ show that
the logic that we work on it is an extension of
\g logic. On the other hand, we have some additional connectives such as
$\otimes$, $\to_{_L}$, and $^{-1}$ whose truth functionalities
acts as the truth functionality of connectives of \lo logic (in the multiplicative notion).
Note that we could define the $\Delta$-Bazz
connective \cite{baaz1996infinite} also.

However the expressive power of the logic is strictly stronger than
\g logic and \lo logic. Observe that as opposed to the \lo logic
we could express $\varphi^\mathcal{M}<\psi^\mathcal{M}$ by
$(\varphi\Rrightarrow\psi)^\mathcal{M}=\infty$ and also
opposed to \g logic (and also \lo logic) we could express $\varphi^\mathcal{M}<\infty$
by $(\varphi\Rrightarrow\top)^\mathcal{M}=\infty$ or
$(\neg\Delta(\varphi))^\mathcal{M}=\infty$.

On the other hand the expressive power is
weaker than the $\L\Pi$ logic \cite{esteva2001pi,cintula2001l,cintula2003advances} as we could not express the
product conjunction (in additive notion).
\end{rem}
The semantical notions of satisfiability, model and entailment are defined as follows.
\begin{defn}
Let $\varphi(\bar{x})$ be a $\tau$-formula, $\psi$ be a $\tau$-sentence, and $T$ be a $\tau$-theory.
\begin{itemize}
\item[(1)] If there is a $\tau^G$-structure $\mathcal{M}$ and $\bar{a}\in M^n$
such that $\varphi^\mathcal{M}(\bar{a})=\infty$, then we call
$\varphi(\bar{x})$ a satisfiable formula. In this case, write
$\mathcal{M}\models\varphi(\bar{a})$ and call $\mathcal{M}=(G,M)$ a model
of $\varphi(\bar{x})$. The class of all models of
$\varphi(\bar{x})$ is denoted by $Mod(\varphi(\bar{x}))$.
\item[(2)] We call $T$ a satisfiable theory if $\cap_{\varphi\in T} Mod(\varphi)\ne\emptyset$.
When $\mathcal{M}\in\cap_{\varphi\in T} Mod(\varphi)$ we say that
$\mathcal{M}$ is a model of $T$ and denote this by
$\mathcal{M}\models T$. The class of all models of $T$
are denoted by $Mod(T)$.
\item[(3)] $T$ is called finitely satisfiable if every finite subset of $T$ has a model.
\item[(4)] $T\models\psi$, if $Mod(T)\subseteq Mod(\varphi)$.
In this case we say that $T$ \emph{entails} $\psi$.
\item[(5)] We write $T\fentail\varphi$ if there exist a finite subset $S$ of $T$ such that $S\models\varphi$.
\end{itemize}
\end{defn}
As in first-order logic the full theory of a $\tau$-structure $\mathcal{M}$ is
\begin{center}
$Th_\tau(\mathcal{M})=\{\varphi: \mathcal{M}\models\varphi, \varphi\in Sent(\tau)\}$.
\end{center}
We may write $Th(\mathcal{M})$ when there is no fear of confusion about the underlying language.
\section{Compactness Theorem}
In this section, using the Henkin construction, we obtain a version of compactness theorem for
additive \g logic.
\begin{defn}
Let $T$ be a $\tau$-theory.
\begin{enumerate}
\item  $T$ is called a \emph{linear complete theory}, if for every $\tau$-sentences $\varphi$ and $\psi$,
either $\varphi\to\psi\in T$ or $\psi\to\varphi\in T$.
\item We say that $T$ is Henkin , if for every $\tau$-formula $\varphi(x)$ that
$T\nfentail\forall x\,\varphi(x)$, there exists some constant $c$ in $\tau$ such that
$T\nfentail\varphi(c)$.
\end{enumerate}
\end{defn}
Bellow, we prove the entailment compactness for AG$\forall$.
Obviously, the entailment compactness implies the usual compactness theorem.
The next theorem is a special case of the entailment compactness
where $T$ is linear complete and Henkin
\begin{thm}\label{entailment compactness henkin}
Let $T$ be a linear complete Henkin $\tau$-theory and $\chi$ be a
$\tau$-sentence. Then $T\models\chi$ if and only if $T\fentail\chi$.
\end{thm}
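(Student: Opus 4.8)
The plan is to establish the easy implication directly and the hard one by contraposition through a Henkin term model. If $T\fentail\chi$ then some finite $S\subseteq T$ satisfies $S\models\chi$, whence $Mod(T)\subseteq Mod(S)\subseteq Mod(\chi)$ and $T\models\chi$; this needs no hypothesis on $T$. For the converse I would assume $T\nfentail\chi$ and produce a model $\mathcal{M}\models T$ with $\mathcal{M}\not\models\chi$. I note first that $T\nfentail\chi$ already forces every finite $S\subseteq T$ to be satisfiable, since an unsatisfiable $S$ would give $S\models\chi$ and hence $T\fentail\chi$; I will use this finite satisfiability freely (for instance to know $T\nfentail\bar{1}$, so that $[\bar{1}]$ is a genuine interior truth value).

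\emph{The value group.} On $Sent(\tau)$ I set $\varphi\preceq\psi$ iff $T\fentail\varphi\to\psi$, and $\varphi\equiv\psi$ iff $\varphi\preceq\psi\preceq\varphi$. Reflexivity comes from the validity of $\varphi\to\varphi$, transitivity from the semantics of $\dotto$ together with the fact that the union of two finite witnessing subsets is again finite, and totality is precisely linear completeness (membership implies $\fentail$). Hence $\Gamma:=Sent(\tau)/{\equiv}$ is a chain with least element $[\bot]$ and greatest element $[\top]$. I then put $0:=[\bot]$, $\infty:=[\top]$, $G:=\Gamma\setminus\{0,\infty\}$, and define $[\varphi]\ast[\psi]:=[\varphi\otimes\psi]$, $[\varphi]^{-1}:=[\varphi^{-1}]$, with unit $[\bar{1}]$. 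Well-definedness and the group axioms follow from the corresponding validities (e.g. $\varphi\otimes\psi\leftrightarrow\psi\otimes\varphi$ and $\varphi\otimes\varphi^{-1}\leftrightarrow\bar{1}$ hold in every structure), and order compatibility from the validity of the monotonicity of $\otimes$. The delicate point is closure: that $[\varphi],[\psi]\in G$ imply $[\varphi\otimes\psi]\in G$. For this I use a \emph{disjunction property} of $\fentail$ under linear completeness, namely $T\fentail\alpha\vee\beta$ implies $T\fentail\alpha$ or $T\fentail\beta$, proved by comparing $\alpha,\beta$ via linear completeness and reading off that the $\preceq$-larger disjunct must be forced; applying it (and its dual with $\neg$) together with the arithmetic of $\Gamma_G$ shows $\varphi\otimes\psi$ can be forced true or false only if one of $\varphi,\psi$ already is, contradicting $[\varphi],[\psi]\in G$. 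This makes $(G,\ast,\preceq)$ a totally ordered abelian group with associated value set $\Gamma_G$.

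\emph{The term model and truth lemma.} Let $M$ be the set of closed $\tau$-terms (nonempty because the Henkin witnesses supply constants), interpret function symbols syntactically, and set $P^{\mathcal{M}}(t_1,\dots,t_n):=[P(t_1,\dots,t_n)]$. I claim the \emph{truth lemma} $\varphi^{\mathcal{M}}=[\varphi]$ for every sentence $\varphi$, by induction on complexity. The atomic, $\bot$, and $\bar{1}$ cases hold by definition, and the $\otimes$, ${}^{-1}$ cases are definitional once $G$ is a group. For $\wedge$ and $\to$ the operations $\dota$ and $\dotto$ are not a priori matched by $[\varphi\wedge\psi]$ and $[\varphi\to\psi]$, and here the disjunction property is essential: the validities $((\varphi\wedge\psi)\leftrightarrow\varphi)\vee((\varphi\wedge\psi)\leftrightarrow\psi)$ and $((\varphi\to\psi)\leftrightarrow\top)\vee((\varphi\to\psi)\leftrightarrow\psi)$ force $[\varphi\wedge\psi]\in\{[\varphi],[\psi]\}$ and $[\varphi\to\psi]\in\{[\top],[\psi]\}$, and comparing $[\varphi]$ with $[\psi]$ selects the correct value, matching $\min$ and G\"odel implication. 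Granting the quantifier cases, the lemma yields $\mathcal{M}\models\varphi$ iff $[\varphi]=[\top]$ iff $T\fentail\varphi$; thus $\mathcal{M}\models T$ (membership implies $\fentail$) while $\mathcal{M}\not\models\chi$, which contradicts $T\models\chi$ and closes the contrapositive.

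\emph{The main obstacle.} I expect the crux to be the quantifier step, which must simultaneously establish safeness (existence of the relevant infima and suprema in $\Gamma_G$) and the identity $\inf_{t}[\psi(t)]=[\forall y\,\psi(y)]$ over closed terms $t$. That $[\forall y\,\psi]$ is a lower bound is just universal instantiation. For the greatest lower bound I reduce to the Henkin property through the validity $(\theta\to\forall y\,\psi(y))\leftrightarrow\forall y\,(\theta\to\psi(y))$ for $\theta$ without free $y$: if some $[\theta]$ with $[\theta]\preceq[\psi(t)]$ for all $t$ satisfied $[\theta]\succ[\forall y\,\psi]$, then $T\nfentail\forall y\,(\theta\to\psi(y))$, so the Henkin property yields a constant $c$ with $T\nfentail\theta\to\psi(c)$, i.e.\ $[\theta]\succ[\psi(c)]$, a contradiction. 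Hence the infimum is attained at $[\forall y\,\psi]$, and the dual equivalence $(\exists y\,\psi(y)\to\theta)\leftrightarrow\forall y\,(\psi(y)\to\theta)$ handles $\exists$ via the same Henkin property. It is this coordination of the Henkin property with safeness, rather than any individual connective, that I anticipate as the heart of the proof, with the disjunction property furnished by linear completeness as the other indispensable ingredient.
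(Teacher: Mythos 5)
Your proposal follows essentially the same route as the paper: form the Lindenbaum group of equivalence classes of sentences (ordered via $\fentail$ of implications, with $\star$ induced by $\otimes$), build the canonical term model on closed $\tau$-terms, and establish the truth lemma $\varphi^{\mathcal{M}}=[\varphi]$ using linear completeness for the propositional connectives and the Henkin property for the quantifier/safeness step. The paper compresses the truth lemma and the closure of the group under $\star$ into ``one can easily verify''; your disjunction-property and Henkin-witness arguments supply exactly those omitted details, so the proposal is correct and matches the paper's proof.
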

\begin{proof}
From right to left direction is obvious. For the other direction, let $T\models\chi$ and for
the purpose of contradiction, suppose that $T\nfentail\chi$. Define an equivalence
relation $\sim$ on the set of $\tau$-sentences as follows:
\begin{center}
$\varphi\sim\psi$ iff $T\fentail\varphi\leftrightarrow\psi$.
\end{center}
For every $\tau$-sentence $\varphi$, let $[\varphi]$ be the
equivalence class of $\varphi$ with respect to $\sim$. Let $Lind(T)$
be the set of all equivalence classes of $\sim$. Define the
operation $\star$ on $Lind(T)$ by
\begin{center}
$[\varphi]\star[\psi]=[\varphi\otimes\psi]$.
\end{center}
One can easily verify
that $G_{Lind(T)}=(Lind(T)\setminus\{[\top],[\bot]\},\star)$ is an Abelian group with
identity element $[\bar{1}]$. For example, $\star$ is an associative operator, as if
$\varphi_1, \varphi_2, \varphi_3\in Sent(\tau)$, then
\begin{center}
$\big([\varphi_1]\star[\varphi_2]\big)\star[\varphi_3]=[\varphi_1\otimes\varphi_2]\star[\varphi_3]
=\big[(\varphi_1\otimes\varphi_2)\otimes\varphi_3\big]$,\\
$[\varphi_1]\star\big([\varphi_2]\star[\varphi_3]\big)=[\varphi_1]\star[\varphi_2\otimes\varphi_3]
=\big[\varphi_1\otimes(\varphi_2\otimes\varphi_3)\big]$.
\end{center}
Now, if $[\varphi_1],[\varphi_2],[\varphi_3]\in G_{Lind(T)}$, then linear completeness of $T$ implies that
\[T\fentail\bigwedge_{i=1}^3\big(\neg\neg\varphi_i\wedge\neg\Delta(\varphi_i)\big).\]
Hence, by an easy argument we have
\[T\fentail\big((\varphi_1\otimes\varphi_1)\otimes\varphi_1\big)\leftrightarrow\big(\varphi_1\otimes(\varphi_1\otimes\varphi_1)\big).\]
Furthermore, by defining $\lessdot$ on $Lind(T)$ as
\begin{center}
$[\varphi]\lessdot[\psi]$ iff $T\fentail\psi\to\varphi$,
\end{center}
we make the group $G_{Lind(T)}$ a totally ordered Abelian group
such that $Lind(T)$ is $\Gamma_{G_{Lind(T)}}$. The group
$G_{Lind{T}}$ is called the Lindenbaum group of $T$-equivalence sentences.
Now, let $CM(T)$ be the set of all closed
$\tau$-terms, i.e., terms constructed only by constants symbols of
$\tau$. Construct the $\tau^{G_{Lind(T)}}$-structure $\mathcal{M}=(G_{Lind(T)},CM(T))$
by setting its universe to be $CM(T)$, and for each $n$-ary function
symbol $f\in\tau$ and $n$-ary predicate symbol $P\in\tau$ define
\begin{itemize}
  \item $f^\mathcal{M}:CM(T)^n\to CM(T)$ by $f^\mathcal{M}(t_1, ..., t_n)=f(t_1, ...,
  t_n)$,
  \item $P^\mathcal{M}:CM(T)^n\to Lind(T)$ by $P^\mathcal{M}(t_1, ..., t_n)=[P(t_1, ...,
  t_n)]$.
\end{itemize}
One can easily verify that for each
$\tau$-sentence $\varphi$, $\varphi^\mathcal{M}=[\varphi]$,
$\mathcal{M}\models T$ and $\chi^\mathcal{M}\ne \infty$.
\qed\end{proof}
$(G_{Lind(T)},CM(T))$ is called the canonical model of the theory $T$.
We need the following lemma to prove the entailment compactness in general case.
\begin{lem}\label{linear complete}
Let $T$ be a $\tau$-theory and $\chi$ be a $\tau$-sentence and $T\nfentail\chi$. There exists
a linear complete $\tau$-theory $T'\supseteq T$ such that $T'\nfentail\chi$.
\end{lem}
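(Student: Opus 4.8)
The plan is to build $T'$ by a Lindenbaum-style maximization, using Zorn's lemma on the family of extensions of $T$ that still fail to finitely entail $\chi$. Consider the poset $\mathcal{P}=\{T''\mid T\subseteq T''\subseteq Sent(\tau)\text{ and }T''\nfentail\chi\}$ ordered by inclusion; it is nonempty since $T\in\mathcal{P}$ by hypothesis. Given a chain $\mathcal{C}$ in $\mathcal{P}$, its union $\bigcup\mathcal{C}$ again lies in $\mathcal{P}$: any finite $S\subseteq\bigcup\mathcal{C}$ is contained in a single member of $\mathcal{C}$ (as $S$ is finite and $\mathcal{C}$ is totally ordered by inclusion), and that member does not finitely entail $\chi$, whence $S\not\models\chi$. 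Thus every chain has an upper bound, and Zorn's lemma produces a maximal element $T'$ of $\mathcal{P}$. It remains only to show this maximal $T'$ is linear complete.

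The key step is the following semantic claim: for any finite $S\subseteq Sent(\tau)$ and any sentences $\varphi,\psi$, if both $S\cup\{\varphi\to\psi\}\models\chi$ and $S\cup\{\psi\to\varphi\}\models\chi$, then already $S\models\chi$. To prove it I would take an arbitrary $\mathcal{M}\models S$. Since the truth-value set $\Gamma_G$ is totally ordered, either $\varphi^\mathcal{M}\le\psi^\mathcal{M}$ or $\psi^\mathcal{M}\le\varphi^\mathcal{M}$. By the truth function of implication, $(\varphi\to\psi)^\mathcal{M}=\varphi^\mathcal{M}\dotto\psi^\mathcal{M}$, the first alternative forces $(\varphi\to\psi)^\mathcal{M}=\infty$, so $\mathcal{M}\models S\cup\{\varphi\to\psi\}$ and hence $\mathcal{M}\models\chi$; the second alternative gives $\mathcal{M}\models\chi$ symmetrically through $\psi\to\varphi$. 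As $\mathcal{M}$ was arbitrary, $S\models\chi$.

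With the claim in hand I would finish by contradiction. Suppose $T'$ is not linear complete, so there are sentences $\varphi,\psi$ with $\varphi\to\psi\notin T'$ and $\psi\to\varphi\notin T'$. Then $T'\cup\{\varphi\to\psi\}$ and $T'\cup\{\psi\to\varphi\}$ are proper extensions of $T'$, so by maximality neither lies in $\mathcal{P}$; that is, both finitely entail $\chi$. Hence there are finite $S_1,S_2\subseteq T'$ with $S_1\cup\{\varphi\to\psi\}\models\chi$ and $S_2\cup\{\psi\to\varphi\}\models\chi$. Setting $S=S_1\cup S_2$, a finite subset of $T'$, monotonicity of $\models$ upgrades these to $S\cup\{\varphi\to\psi\}\models\chi$ and $S\cup\{\psi\to\varphi\}\models\chi$, so the claim yields $S\models\chi$. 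This means $T'\fentail\chi$, contradicting $T'\in\mathcal{P}$. Therefore $T'$ is linear complete, as required.

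The one genuinely substantive point is the semantic claim of the second paragraph; everything else is routine Zorn's-lemma bookkeeping. I expect the main obstacle to be nothing more than correctly invoking the behaviour of $\to$ under the total order of $\Gamma_G$, namely that in every structure at least one of $\varphi\to\psi$, $\psi\to\varphi$ evaluates to $\infty$. As an alternative to Zorn one could instead fix an enumeration of all pairs of $\tau$-sentences and extend $T$ one implication at a time, preserving $\nfentail\chi$ at each stage by the same claim and taking the union; this avoids the maximality argument but rests on exactly the same semantic fact.
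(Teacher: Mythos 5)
Your proposal is correct and follows essentially the same route as the paper: the paper's proof is exactly the observation that for any $\varphi,\psi$, at least one of $T\cup\{\varphi\to\psi\}$, $T\cup\{\psi\to\varphi\}$ still fails to finitely entail $\chi$ (your semantic claim, which the paper leaves as ``easy to see''), followed by an appeal to Zorn's lemma. You have merely supplied the details the paper omits, including the correct verification that in every structure one of the two implications evaluates to $\infty$ by totality of the order on $\Gamma_G$.
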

\begin{proof}
It is easy to see that for every $\tau$-sentences $\varphi$ and $\psi$, either $T\cup\{\varphi\to\psi\}\nfentail\chi$ or $T\cup\{\psi\to\varphi\}\nfentail\chi$.
Now, using Zorn's lemma the desirable linear complete theory established.
\qed\end{proof}
Now, we could prove the entailment compactness in general case.
\begin{thm}\label{entailment compactness}
Let $T$ be a $\tau$-theory and $\chi$ be a $\tau$-sentence.
$T\models\chi$ if and only if $T\fentail\chi$.
\end{thm}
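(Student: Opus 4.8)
The right-to-left direction is immediate from the definitions, so the plan is to establish the contrapositive of the forward direction: assuming $T\nfentail\chi$, we build a structure $\mathcal{M}\models T$ with $\chi^{\mathcal{M}}\ne\infty$, which gives $T\not\models\chi$. The strategy is to enlarge $T$, inside a suitably expanded language, to a theory that is simultaneously \emph{linear complete} and \emph{Henkin} and that still does not finitely entail $\chi$; once this is achieved, Theorem~\ref{entailment compactness henkin} applies and furnishes a countermodel, from which the desired $\tau$-model is recovered by taking a reduct.

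First we Henkinize. Expand $\tau$ by a fresh constant $c_{\varphi}$ for every one-variable formula $\varphi(x)$, and iterate this $\omega$ times, setting $\tau^{*}=\bigcup_{n}\tau_{n}$ so that every $\tau^{*}$-formula already has its witness at some finite stage. To $T$ we adjoin all Henkin axioms
\[
H_{\varphi}\;:=\;\varphi(c_{\varphi})\to\forall x\,\varphi(x),
\]
yielding a theory $T_{H}$. The crucial point is that $T_{H}\nfentail\chi$. Since any finite subset of $T_{H}$ involves only finitely many of the $H_{\varphi}$, it suffices to prove a one-step preservation lemma: \emph{if $U\nfentail\chi$ and $c$ occurs in neither $U$ nor $\chi$, then $U\cup\{\varphi(c)\to\forall x\,\varphi(x)\}\nfentail\chi$.} For this one starts from a structure $\mathcal{N}\models S$ with $\chi^{\mathcal{N}}\ne\infty$ for a finite $S\subseteq U$ (such $\mathcal{N}$ exists since $S\not\models\chi$); if $(\forall x\,\varphi)^{\mathcal{N}}=\infty$ the added axiom holds whatever $c$ denotes, and otherwise one must interpret $c$ as a witness realizing the infimum $(\forall x\,\varphi)^{\mathcal{N}}=\inf_{b}\varphi^{\mathcal{N}}(b)$, so that $\varphi^{\mathcal{N}}(c^{\mathcal{N}})\le(\forall x\,\varphi)^{\mathcal{N}}$ and hence $H_{\varphi}^{\mathcal{N}}=\infty$, while $S$ and $\chi$ keep their values because $c$ is new. \textbf{This witnessing step is the main obstacle}: in an arbitrary safe structure the infimum need not be attained, so the argument must produce a countermodel of $\chi$ over $S$ in which the infimum of $\varphi$ is realized, exploiting the order structure of $\Gamma_{G}$; this is exactly where the special features of the truth-value set have to be used.

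Granting the preservation lemma, an induction on $n$ gives $T_{H}\nfentail\chi$, and by construction $T_{H}$ is Henkin. We then invoke Lemma~\ref{linear complete} to extend $T_{H}$, within the same language $\tau^{*}$, to a linear complete theory $T'\supseteq T_{H}$ with $T'\nfentail\chi$. Linear completion does not enlarge the language, so every witness $c_{\varphi}$ survives, and $T'$ stays Henkin: if $T'\fentail\varphi(c_{\varphi})$ then, combining with $H_{\varphi}\in T'$ via modus ponens (valid here, since an antecedent of value $\infty$ together with a conditional of value $\infty$ forces the consequent to be $\infty$), we obtain $T'\fentail\forall x\,\varphi$, so contrapositively $T'\nfentail\forall x\,\varphi$ supplies some $c$ with $T'\nfentail\varphi(c)$. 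Thus $T'$ is linear complete, Henkin, and $T'\nfentail\chi$, whence Theorem~\ref{entailment compactness henkin} gives $T'\not\models\chi$, i.e.\ a $\tau^{*}$-structure $\mathcal{M}'\models T'$ with $\chi^{\mathcal{M}'}\ne\infty$ (the canonical model constructed there serves). Finally, as $T$ and $\chi$ are $\tau$-sentences and the interpretation of a sentence depends only on the symbols occurring in it, the reduct $\mathcal{M}'\!\restriction\!\tau$ satisfies $T$ with $\chi^{\mathcal{M}'\restriction\tau}\ne\infty$; hence $T\not\models\chi$, completing the contrapositive.
\qed
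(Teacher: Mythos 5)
There is a genuine gap, and you have located it yourself without closing it. Your whole argument hinges on the one-step preservation lemma: if $U\nfentail\chi$ and $c$ is fresh, then $U\cup\{\varphi(c)\to\forall x\,\varphi(x)\}\nfentail\chi$. You correctly observe that in a safe structure the infimum $(\forall x\,\varphi)^{\mathcal N}=\inf_b\varphi^{\mathcal N}(b)$ need not be attained, and then write that the argument ``must produce a countermodel \dots in which the infimum of $\varphi$ is realized'' --- but you never produce it. This is not a routine verification: if $(\forall x\,\varphi)^{\mathcal N}=g<\infty$ is not attained, then \emph{every} interpretation of $c$ gives $\varphi^{\mathcal N}(c^{\mathcal N})>g$ and hence $\bigl(\varphi(c)\to\forall x\,\varphi(x)\bigr)^{\mathcal N}=g<\infty$, so the classical Henkin axiom fails in $\mathcal N$ no matter what $c$ denotes. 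Repairing this would require modifying the universe or the truth-value group so that the infimum becomes attained while all sentences of $S$ keep value $\infty$ and $\chi$ stays below $\infty$; nothing in your proposal does this, and it is essentially the whole difficulty of the theorem.

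The paper's proof is designed precisely to sidestep this. Instead of $\varphi(c_\varphi)\to\forall x\,\varphi(x)$ it adds, at each stage, a \emph{fresh nullary predicate} $\chi_{n+1}$ and the axioms $\chi_n\to\chi_{n+1}$ and $\varphi(c_\varphi)\to\chi_{n+1}$, only for those $\varphi$ with $\overline{T}_n\nfentail\forall x\,\varphi(x)$. Since $\chi_{n+1}$ is uninterpreted, its value can be set to any finite element dominating $\chi_n^{\mathcal M}$ and finitely many values $\varphi_i^{\mathcal M}(b_i)<\infty$ (such $b_i$ exist because the infima are $<\infty$, even if unattained), so no witness realizing an infimum is ever needed; linear completeness of $\overline{T}_n$ is used to combine the finitely many non-entailments into a single countermodel. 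The resulting theory is Henkin in the paper's (deliberately weak) sense --- a co-witness for non-entailment --- which is all that the canonical-model argument of Theorem~\ref{entailment compactness henkin} requires. Your surrounding scaffolding (Henkinize, linear-complete via Lemma~\ref{linear complete}, apply Theorem~\ref{entailment compactness henkin}, take the reduct) matches the paper, and your observation that linear completion preserves Henkin-ness is fine; but as written the proof does not go through at its central step.
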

\begin{proof}
We prove the non-trivial direction. Suppose that
$T\nfentail\chi$. We show that there exist a language $\tau'\supseteq\tau$ and a
linear complete Henkin $\tau'$-theory $T'\supseteq T$ such that
$T'\nfentail\chi$.

Let $\chi_0=\chi$, $\tau_0=\tau$, and $T_0=T$.
On the basis of Lemma \ref{linear complete} there is a linear complete theory $\overline{T}_0$
containing $T_0$ such that $\overline{T}_0\nfentail\chi_0$. We extend the language
$\tau_0$ by adding a new nullary predicate symbol $\chi_1$
and new constant symbols $\{c_\varphi: \varphi(x)\in Form(\tau_0)\}$ and let
$\tau_1=\tau\cup\{\chi_1\}\cup\{c_\varphi: \overline{T}_0\nfentail\forall x\,\varphi(x)\}$.
Subsequently put
\begin{center}
$T_1=\overline{T}\cup\{\chi_0\to\chi_1\}\cup\{\varphi(c_\varphi)\to\chi_1: \overline{T}_0\nfentail\forall x\,\varphi(x)\}$.
\end{center}

Now, we show that $T_1\nfentail\chi_1$. To this end, let $U$ be a finite subset of $\overline{T}_0$ and
$S=U\cup\{\chi_0\to\chi_1\}\cup\{\varphi_i(c_{\varphi_i})\to\chi_1: \overline{T}_0\nfentail\forall x\,\varphi_i(x)\}_{i=1}^m$. Since $\overline{T}_0$ is linear complete and $\overline{T}_0\nfentail\chi_0$
and also $\overline{T}_0\nfentail\forall x\,\varphi_i(x)$ for $1\le i\le m$ it follows that
$\overline{T}_0\nfentail\chi_0\vee\big(\bigvee_{i=1}^m\forall x\,\varphi_i(x)\big)$. Hence,
there is a $\tau_0$-structure $\mathcal{M}\models U$ such that
$\max\{\chi_0^\mathcal{M},(\forall x\,\varphi_1(x))^\mathcal{M}, ..., (\forall x\,\varphi_m(x))^\mathcal{M}\}=g<\infty$.
Now, interpreting $\chi_1^\mathcal{M}$ by $g$, making $\mathcal{M}$ as
a $\tau_1$-structure such that $\mathcal{M}\models S$ and $\chi_1^\mathcal{M}=g<\infty$.

By iterating the above construction, we get sequence
$\tau_0\subseteq\tau_1\subseteq ...$ of first-order languages,
$T_0\subseteq T_1\subseteq ...\subseteq T_n\subseteq ...$ of $\tau_n$-theories,
and $\{\chi_n\}_{n=0}^\infty$ of $\tau_n$-sentences such that for each $n\ge0$,
$T_n\nfentail\chi_n$ and $T_{n+1}\models\chi_n\to\chi_{n+1}$.
Set $\tau'=\bigcup_{n\ge 0}\tau_n$ and let $T_\infty=\bigcup_{n\ge 0}T_n$.
Clearly, $T_\infty\nfentail\chi$. Thus, on the basis
of Lemma \ref{linear complete} there exists a linear complete
theory $T'$ containing $T_\infty$ such that $T'\nfentail\chi$. Obviously, $T'$ is a
Henkin $\tau'$-theory. It follows from Theorem \ref{entailment compactness henkin} that
$T'\not\models\chi$. So, $T\not\models\chi$
\qed\end{proof}
The compactness theorem immediately follows from the above theorem.
\begin{cor}(Compactness Theorem)
A theory $T$ is satisfiable if and only if it is finitely
satisfiable.
\end{cor}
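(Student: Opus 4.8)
The plan is to derive both implications directly from the entailment compactness theorem (Theorem~\ref{entailment compactness}), using the constant $\bot$ as a distinguished unsatisfiable sentence. The left-to-right implication is immediate: if $\mathcal{M}\models T$, then $\mathcal{M}$ is a model of every finite $S\subseteq T$, so $T$ is finitely satisfiable. The content is entirely in the converse.

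For the converse I would first translate satisfiability into a non-entailment statement. Since $\bot^{\mathcal{M}}=0\neq\infty$ in every structure $\mathcal{M}$, we have $Mod(\bot)=\emptyset$. Hence, for any theory $S$, the condition $S\models\bot$ amounts to $Mod(S)\subseteq Mod(\bot)=\emptyset$, i.e.\ to $Mod(S)=\emptyset$; in other words, $S\models\bot$ holds precisely when $S$ is \emph{unsatisfiable}. This gives a dictionary turning (un)satisfiability of a theory into an entailment of $\bot$.

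With this dictionary the corollary is just the contrapositive of Theorem~\ref{entailment compactness} specialized to $\chi=\bot$. Assume $T$ is finitely satisfiable. Then no finite $S\subseteq T$ is unsatisfiable, so $S\not\models\bot$ for every finite $S\subseteq T$; by the definition of $\fentail$ this says exactly $T\nfentail\bot$. Applying Theorem~\ref{entailment compactness} (in contrapositive form) yields $T\not\models\bot$, which by the dictionary means $Mod(T)\neq\emptyset$, i.e.\ $T$ is satisfiable.

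There is essentially no hard step: all the real work is carried by Theorem~\ref{entailment compactness}, and what remains is purely definitional unwinding. The only point deserving a moment's care is to confirm that $\bot$ is a genuine $\tau$-sentence with empty model class and that the entailment relation treats the empty model class by vacuous inclusion, so that ``$T\models\bot$'' is faithfully synonymous with ``$T$ is unsatisfiable''.
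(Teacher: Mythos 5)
Your proof is correct and is exactly the argument the paper intends: the paper offers no explicit proof, stating only that the corollary ``immediately follows'' from Theorem~\ref{entailment compactness}, and the intended derivation is precisely your specialization to $\chi=\bot$ together with the observation that $S\models\bot$ is synonymous with the unsatisfiability of $S$. The definitional unwinding you supply is the whole content, and it is carried out correctly.
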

\begin{rem}
Note that if $T$ is finitely satisfiable by $G$-models, then it is not
necessarily satisfiable by a $G$-model, while by the above corollary $T$
is satisfiable by a $G'$-model for some totally ordered Abelian group $G'$. To see this,
let $\mathcal{L}=\{\epsilon, \rho\}$ be a relational language consisting of two
nullary predicate symbols. Set,
\begin{center}
$T=\{\bar{1}\Rrightarrow\rho, \epsilon\Rrightarrow\top\}\cup\{\rho^n\Rrightarrow\epsilon\}_{n\in\mathbb{N}}$.
\end{center}
$T$ is finitely satisfiable by standard models, but it has no standard model.
On the other hand, observe that by compactness theorem $T$ is satisfiable.
For example, if we take $G=(\mathbb{R}^{>0})^2$ with
the lexicographical ordering and the componentwise
multiplication, then $T$ has a $G$-model.
\end{rem}
One could naturally ask weather any satisfiable theory has a standard model.
\begin{con}\label{stmodel}
If $T$ is a finite satisfiable theory, then it has a standard model.
\end{con}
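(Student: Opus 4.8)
The plan is to begin from an arbitrary model and replace its value group by an Archimedean one, exploiting that a finite theory can only constrain finitely many ``Archimedean scales''. Recall first that, by H\"older's theorem, a totally ordered Abelian group embeds as an ordered group into $(\mathbb{R}^{>0},*,1)$ exactly when it is Archimedean; hence a structure is standard precisely when its underlying group is Archimedean, and the task reduces to producing, for a finite satisfiable $T$, a model whose value group is Archimedean. Since $T$ is satisfiable, fix a model $\mathcal{M}=(G,M)\models T$, and, taking the conjunction of the finitely many sentences of $T$, reduce to a single sentence $\sigma$ with $\sigma^{\mathcal{M}}=\infty$.

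The guiding observation is the one displayed in the remark preceding the conjecture: the only way to force a non-Archimedean value group is through an \emph{infinite} family of constraints of the type $\rho^{n}\Rrightarrow\epsilon$, which together push all powers $\rho^{n}$ below a fixed element $\epsilon$. A single sentence $\sigma$ contains only boundedly many occurrences of $\otimes$, so there is an $N$ bounding every power $\varphi^{n}$ appearing in $\sigma$; intuitively $\sigma$ can compare two group elements only by multiplying one of them by at most $N$ copies of another, and so cannot detect Archimedean behaviour beyond a window of size $N$. The plan is to make this precise by a \emph{standardization} of $\mathcal{M}$: one passes to a quotient of $G$ by a convex subgroup chosen so as to retain every strict inequality and every product relation that $\sigma$ actually uses, and then maps this quotient order- and product-faithfully into $(\mathbb{R}^{>0},*,1)$. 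Transporting the interpretations of the predicate symbols along this standardization yields a candidate standard structure $\mathcal{M}'$ on the same universe $M$. One would then verify, by induction on the subformulas of $\sigma$, that $\psi^{\mathcal{M}'}=\infty$ if and only if $\psi^{\mathcal{M}}=\infty$ for every relevant $\psi$, whence $\sigma^{\mathcal{M}'}=\infty$ and $\mathcal{M}'\models T$; the steps for $\wedge$, $\to$, $\otimes$, ${}^{-1}$ and $\Rrightarrow$ should go through because the standardization preserves, within the window of size $N$, both the order and the group operation.

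The hard part will be the quantifier steps of this induction. Whereas the atomic truth values form only a finite configuration once $\sigma$ is fixed, the clauses $\forall y\,\psi=\inf_{b}\psi^{\mathcal{M}}(b)$ and $\exists y\,\psi=\sup_{b}\psi^{\mathcal{M}}(b)$ range over the entire (possibly infinite) universe $M$, and an infimum or supremum can by itself cross an Archimedean class: a supremum of elements each strictly below $\epsilon$ may equal an element infinitely larger elsewhere in the ordering. This is exactly the mechanism that the infinite counterexample exploits, so the crux is to show that bounded quantifier rank limits the number of such crossings, and that the standardization can be chosen to respect all of them simultaneously. Controlling the interaction of these quantifier-induced suprema and infima with the collapse of convex subgroups is the main obstacle, and it is what should separate the finite case asserted here from the infinite situation in which the conjecture provably fails.
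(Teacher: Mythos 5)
There is nothing in the paper to compare your argument against: the statement you are addressing is presented there as a \emph{conjecture} (and is restated as an open question in the concluding remarks), so the paper offers no proof of it. Your text is accordingly a proof strategy rather than a proof, and you concede as much in your final paragraph: the quantifier clauses, where infima and suprema range over an infinite universe and can cross Archimedean classes of the value group, are exactly where the argument stops. As written, the induction is carried out only for the propositional and group-theoretic connectives, so the conjecture remains open after your attempt.

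Beyond the gap you acknowledge, two further points make the strategy itself doubtful. First, your ``window of size $N$'' bounds only the \emph{syntactic} occurrences of $\otimes$ in $\sigma$; it does not bound the truth values that $\sigma$ can \emph{realize}. With one unary function symbol $f$ and one unary predicate $P$, the single sentence $\forall x\,\bigl(P(f(x))\leftrightarrow (P(x)\otimes P(x))\bigr)$ forces $P^{\mathcal{M}}(f^{n}(a))=\bigl(P^{\mathcal{M}}(a)\bigr)^{2^{n}}$ for all $n$, an infinite unbounded family of powers that may spread across infinitely many Archimedean classes of $G$; so the set of strict inequalities and product relations that ``$\sigma$ actually uses'' is in general infinite, and no quotient by a convex subgroup can retain them all while yielding an Archimedean group (an Archimedean quotient requires collapsing a maximal proper convex subgroup to the identity, which destroys every strict inequality inside it). Second, you never engage with safety. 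The paper requires every supremum and infimum of every formula to exist in a structure, and this is a genuinely restrictive hypothesis here: for $g>1_G$ the set $\{g^{n}\}_{n\in\mathbb{N}}$ has a supremum in $\Gamma_G$ only when it is unbounded in $G$, which is precisely what blocks the most obvious one-sentence counterexamples to the conjecture. Any correct proof will almost certainly have to \emph{use} safety of the given model as an input, and will also have to re-verify safety of the standard structure produced at the end; your sketch does neither. The statement should therefore still be regarded as open.
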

\section{Some Model Theory}
In this section, some basic model theoretic concepts of AG$\forall$ is studied.
Various model theoretic definitions such as elementary equivalence, elementary embedding, substructure, and
diagram are studied recently in the context of mathematical fuzzy logics
\cite{cintula2010triangular,hajek2006theories,dellunde2010elementary,dellunde2012preserving,dellunde2013elementary}.

In this paper, we assume that the underlying language $\tau$ contains a binary predicate
symbol which reflects the properties of the equality relation. This assumption
is necessary, since most model theoretic results can not be achieved without the
equality relation.
\subsection{AG$\forall$ with the Equality Relation}\hfill

In the rest of this section, fix a first-order language $\tau_e$ including
a binary predicate symbol $e$. This predicate plays the same role as the equality
relation in classical first-order logic. The essential properties of the equality
relation are the similarity axioms, i.e.,
\begin{center}
$\forall x\,(x=x)$,\\
$\forall x\forall y\,(x=y\to y=x)$,\\
$\forall x\forall y \forall z\,\left((x=y\wedge y=z)\to x=z\right).$
\end{center}
Let $\mathcal{M}$ be a $\tau_e$-structure which models the following similarity axioms.
\begin{center}
$\left\{\forall x\,e(x,x),
\forall x\forall y\,\left(e(x,y)\to e(y,x)\right),
\forall x\forall y \forall z\,\left((e(x,y)\wedge e(y,z))\to e(x,z)\right)\right\}$.
\end{center}
Then, for all $a,b,c\in\mathcal{M}$,
\begin{center}
$e^\mathcal{M}(a,a)=\infty$,\\
$e^\mathcal{M}(a,b)=e^\mathcal{M}(b,a)$,\\
$e^\mathcal{M}(a,b)\ge\min\{e^\mathcal{M}(a,c), e^\mathcal{M}(b,c)\}$.
\end{center}
So, the interpretation of $e^{-1}$ in $\mathcal{M}$ is as like as
a pseudo-ultrametric on the universe of $\mathcal{M}$ (a pseudo-metric in which for all $a,b,c\in M$,
$(e^{-1})^\mathcal{M}(a,b)\le\max\{(e^{-1})^\mathcal{M}(a,c),(e^{-1})^\mathcal{M}(b,c)\}$).
\begin{defn}
Let $\mathcal{M}=(G,M)$ be a $\tau_e$-structure. We call $\mathcal{M}$ An ultrametric structure, whenever
for all $a,b,c\in M$
\begin{itemize}
\item $(e^{-1})^\mathcal{M}(a,b)=0$ if and only if $a=b$,
\item $(e^{-1})^\mathcal{M}(a,b)=(e^{-1})^\mathcal{M}(b,a)$,
\item $(e^{-1})^\mathcal{M}(a,b)\le\max\{(e^{-1})^\mathcal{M}(a,c),(e^{-1})^\mathcal{M}(b,c)\}$.
\end{itemize}
To simplify the notions, for a $\tau_e$-ultrametric structure $\mathcal{M}$ we denote
$e^{-1}$ by $d$.
\end{defn}
\begin{exa} Any first-order structure could be viewed as an ultrametric structure. As we expect,
any ultrametric space $(M,d)$ is an ultrametric structure. Also Any normed field (valued field) is an ultrametric structure.
\end{exa}
\begin{defn}
Let $T$ be a $\tau_e$-theory and $\varphi$ be a $\tau_e$-sentence.
\begin{enumerate}
  \item $T$ is called an $m$-satisfiable theory if there is an ultrametric structure
  $\mathcal{M}\models T$.
  \item We call $T$ finitely $m$-satisfiable whenever every finite subset of
  $T$ has an ultrametric model.
  \item $T\models_m\varphi$ if each ultrametric model of $T$, models $\varphi$.
  \item $T\fentail_m\varphi$ if there is a finite subset $S$ of $T$
  such that $S\models_m\varphi$. Otherwise we write $T\nfentail_m\varphi$.
\end{enumerate}
\end{defn}
The ultrametric version of entailment compactness could be established as follows.
\begin{thm}\label{entailment compactness henkin ultrametric}
Let $T$ be a $\tau_e$-theory and $\chi$ be a
$\tau_e$-sentence. $T\models_m\chi$ if and only if $T\fentail_m\chi$.
\end{thm}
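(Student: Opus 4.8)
The plan is to reduce the assertion to the general entailment compactness of Theorem~\ref{entailment compactness} by interposing the similarity axioms. Let $T_{\mathrm{sim}}$ denote the three similarity axioms $\forall x\,e(x,x)$, $\forall x\forall y\,(e(x,y)\to e(y,x))$ and $\forall x\forall y\forall z\,((e(x,y)\wedge e(y,z))\to e(x,z))$. Since every finite subset of $T$ is a subset of $T$, the right-to-left direction $T\fentail_m\chi\Rightarrow T\models_m\chi$ is immediate, so I only prove the contrapositive of the nontrivial direction, i.e. the chain
\[
T\nfentail_m\chi\ \Longrightarrow\ T\cup T_{\mathrm{sim}}\nfentail\chi\ \Longrightarrow\ T\cup T_{\mathrm{sim}}\not\models\chi\ \Longrightarrow\ T\not\models_m\chi .
\]

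For the first implication, assume $T\nfentail_m\chi$ and let $S'$ be an arbitrary finite subset of $T\cup T_{\mathrm{sim}}$. Setting $S=S'\cap T$, a finite subset of $T$, the hypothesis supplies an ultrametric model $\mathcal{M}\models S$ with $\mathcal{M}\not\models\chi$. A short computation shows that any ultrametric structure satisfies $T_{\mathrm{sim}}$: the first defining clause gives $e^{\mathcal{M}}(a,a)=\infty$, the second gives symmetry, and inverting the ultrametric inequality $(e^{-1})^{\mathcal{M}}(a,c)\le\max\{(e^{-1})^{\mathcal{M}}(a,b),(e^{-1})^{\mathcal{M}}(b,c)\}$ yields $e^{\mathcal{M}}(a,c)\ge\min\{e^{\mathcal{M}}(a,b),e^{\mathcal{M}}(b,c)\}$, which is exactly the transitivity clause. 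Hence $\mathcal{M}\models S'$ while $\mathcal{M}\not\models\chi$, so $S'\not\models\chi$; as $S'$ was arbitrary, $T\cup T_{\mathrm{sim}}\nfentail\chi$. The second implication is nothing but the contrapositive of Theorem~\ref{entailment compactness} applied to the theory $T\cup T_{\mathrm{sim}}$ and the sentence $\chi$.

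The third implication is where the real work lies. Starting from a safe model $\mathcal{M}\models T\cup T_{\mathrm{sim}}$ with $\mathcal{M}\not\models\chi$, I would pass to the quotient: define $a\sim b$ iff $e^{\mathcal{M}}(a,b)=\infty$ (an equivalence relation, by $T_{\mathrm{sim}}$) and form $\mathcal{N}=\mathcal{M}/{\sim}$ over the same group $G$, setting $f^{\mathcal{N}}([a_1],\dots,[a_n])=[f^{\mathcal{M}}(a_1,\dots,a_n)]$ and $P^{\mathcal{N}}([a_1],\dots,[a_n])=P^{\mathcal{M}}(a_1,\dots,a_n)$. Then $(e^{-1})^{\mathcal{N}}([a],[b])=0$ iff $e^{\mathcal{M}}(a,b)=\infty$ iff $[a]=[b]$, and symmetry and the ultrametric inequality descend verbatim, so $\mathcal{N}$ is an ultrametric structure. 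The key step—and the one I expect to be the main obstacle—is the preservation lemma $\varphi^{\mathcal{N}}=\varphi^{\mathcal{M}}$ for every sentence $\varphi$, proved by induction on $\varphi$; granting it, $\mathcal{N}\models T$ and $\mathcal{N}\not\models\chi$, so $T\not\models_m\chi$, as required. Already the well-definedness of the clauses above and the atomic case demand that $\sim$ be a \emph{congruence}, i.e.\ that the interpretations of all function and predicate symbols respect $e$; this is the congruence property implicit in $e$ reflecting equality, and it is exactly here that it is used: from $a_i\sim a_i'$ one obtains $f^{\mathcal{M}}(\bar a)\sim f^{\mathcal{M}}(\bar a')$ and, using $(P(\bar x)\leftrightarrow P(\bar y))^{\mathcal{M}}=d_{min}(P^{\mathcal{M}}(\bar a),P^{\mathcal{M}}(\bar a'))$, that the value $\infty$ forces $P^{\mathcal{M}}(\bar a)=P^{\mathcal{M}}(\bar a')$. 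The connective clauses are routine because the operations on $\Gamma_G$ are untouched by the quotient, and the quantifier clauses hold because the projection $M\to M/{\sim}$ is surjective, so the infima and suprema defining $(\forall y\,\psi)^{\mathcal{N}}$ and $(\exists y\,\psi)^{\mathcal{N}}$ run over the same subsets of $\Gamma_G$ as in $\mathcal{M}$ and, by safety of $\mathcal{M}$, exist with the same values.
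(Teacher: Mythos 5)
Your reduction is a genuinely different route from the paper's: the paper does not detour through Theorem \ref{entailment compactness}, but reruns the Henkin--Lindenbaum construction directly for the $m$-semantics, taking as universe the closed terms modulo $t_1\backsim t_2$ iff $T\fentail_m e(t_1,t_2)$. That choice is what makes well-definedness of the quotient painless there: since $\fentail_m$ quantifies only over ultrametric models, in which $e(t,t')=\infty$ forces $t^{\mathcal{N}}=t'^{\mathcal{N}}$ literally, congruence of $\backsim$ with respect to every function and predicate symbol comes for free. Your first two implications are fine: every ultrametric structure does satisfy the three similarity axioms, because $^{-1}$ reverses the order on $\Gamma_G$, and the second implication is indeed just Theorem \ref{entailment compactness}.

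The gap is in the third implication, exactly at the point you flag and then wave away. The model $\mathcal{M}$ produced by Theorem \ref{entailment compactness} is an arbitrary safe model of $T\cup T_{\mathrm{sim}}$, and the similarity axioms say nothing about how $e$ interacts with the other symbols of $\tau_e$; there is no congruence property ``implicit in $e$ reflecting equality'' anywhere in the paper's setup. Concretely, take $\tau_e=\{e,f\}$ with $f$ unary, $M=\{a,b,c\}$, $e^{\mathcal{M}}(x,x)=\infty$, $e^{\mathcal{M}}(a,b)=\infty$, $e^{\mathcal{M}}(a,c)=e^{\mathcal{M}}(b,c)=g$ for some $g\in G$, $f^{\mathcal{M}}(a)=a$ and $f^{\mathcal{M}}(b)=c$: this satisfies $T_{\mathrm{sim}}$, yet $a\sim b$ while $f^{\mathcal{M}}(a)\not\sim f^{\mathcal{M}}(b)$, so $f^{\mathcal{N}}$ is not well defined and your quotient does not exist. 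The repair is to enlarge $T_{\mathrm{sim}}$ with, for each function symbol $f$ and predicate symbol $P$, the $\Delta$-guarded congruence axioms $\forall\bar x\,\forall\bar y\,\big(\Delta(\bigwedge_i e(x_i,y_i))\to e(f(\bar x),f(\bar y))\big)$ and $\forall\bar x\,\forall\bar y\,\big(\Delta(\bigwedge_i e(x_i,y_i))\to(P(\bar x)\leftrightarrow P(\bar y))\big)$. These hold in every ultrametric structure (the antecedent takes value $\infty$ only when $\bar x=\bar y$), so your first implication survives, and in any model of the enlarged theory they force $\sim$ to be a congruence, after which your quotient and the preservation induction go through. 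Note that the unguarded congruence axioms would not do: an ultrametric structure need not interpret $f$ as $1$-Lipschitz with respect to $d=e^{-1}$, so they can fail in ultrametric models and would break the first implication.
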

\begin{proof}
The proof is similar to the proof of Theorem \ref{entailment compactness}. By the same way as the proof of
Theorem \ref{entailment compactness} we could assume that $T$ is a linear complete
Henkin $\tau_e$-theory. Let $G_{Lind(T)}$ be the Lindenbaum group of $T$-equivalence sentences
introduced in Theorem \ref{entailment compactness henkin}. Note that here  $Lind(T)$ is the set of all
equivalence classes of the relation $\sim$ on $Sent(\tau)$ which is defined by
\begin{center}
$\varphi\sim\psi$  if and only if $T\fentail_m \varphi\leftrightarrow\psi$.
\end{center}
However, the definition of the order on $Lind(T)$ does not change. So, for $\varphi,\psi\in Sent(\tau)$,
$[\varphi]\lessdot[\psi]$ if and only if $T\fentail\psi\to\varphi$.

Define an equivalence relation on
the set of all closed $\tau_e$-terms as follows.
\begin{center}
$t_1\backsim t_2$ if and only if $T\fentail_m e(t_1,t_2)$
\end{center}
Let $\langle t\rangle$ be equivalence class of $t$ and suppose that
$CM_m(T)$ be the set of equivalence classes of $\backsim$. The canonical ultrametric structure $(G_{Lind(T)},CM_m(T))$
of $T$ is constructed as follows:
\begin{itemize}
\item For each $n$-ary function symbol $f$ define the function $f^\mathcal{M}:M^n\to M$ by $f^\mathcal{M}(\langle t_1\rangle, ..., \langle t_n\rangle)=\langle f(t_1, ..., t_n)\rangle$.
\item For each $n$-ary predicate symbol $P$ define $P^\mathcal{M}:M^n\to \Gamma_{G_{Lind(T)}}$ by $P^\mathcal{M}(\langle t_1\rangle, ..., \langle t_n\rangle)=[P(t_1, ..., t_n)]$.
\end{itemize}
Note that $f^\mathcal{M}$ is well-defined. Indeed, if for $1\le i\le n_f$, $\langle t_i\rangle=\langle t'_i\rangle$, then
$T\fentail_m e(t_i,t'_i)$ and by linear completeness of $T$ we have $T\fentail_m \bigwedge_{i=1}^{n_f}e(t_i,t'_i)$. Hence,
there is a finite subset $S$ of $T$ such that for each ultrametric model $\mathcal{N}\models S$,
$(t_i)^\mathcal{N}=(t'_i)^\mathcal{N}$ for $1\le i\le n_f$. So,
\[f^\mathcal{N}\big((t_1)^\mathcal{N}, ..., (t_{n_f})^\mathcal{N}\big)=
f^\mathcal{N}\big((t'_1)^\mathcal{N}, ..., (t'_{n_f})^\mathcal{N}\big)\]
i.e.,
\[\mathcal{N}\models e\big(f(t_1, ..., t_n),f(t'_1, ..., t'_n)\big).\]
But, then as $\mathcal{N}$ is any arbitrary ultrametric model of $S$ we have
\[T\fentail_m e\big(f(t_1, ..., t_n),f(t'_1, ..., t'_n)\big).\]
A similar argument show that $P^\mathcal{M}$ is well-defined and this complete the proof.
\qed\end{proof}
\begin{cor}\label{compactness ultrametric}
A theory $T$ is finitely $m$-satisfiable if and only if it is $m$-satisfiable.
\end{cor}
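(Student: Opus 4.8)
The plan is to obtain this corollary from the ultrametric entailment compactness of Theorem~\ref{entailment compactness henkin ultrametric}, in exactly the way the ordinary Compactness Theorem was read off from Theorem~\ref{entailment compactness}. One direction is immediate: if $T$ is $m$-satisfiable, say $\mathcal{M}\models T$ for some ultrametric structure $\mathcal{M}$, then $\mathcal{M}$ is an ultrametric model of every finite subset of $T$, so $T$ is finitely $m$-satisfiable.

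For the converse I would argue by contraposition, using $\bot$ as a sentence that is unsatisfiable in every structure. Since $\bot^\mathcal{M}=0\ne\infty$ in each $\mathcal{M}$, no ultrametric structure models $\bot$; consequently, for any theory $S$, the entailment $S\models_m\bot$ holds precisely when $S$ has no ultrametric model (the condition ``every ultrametric model of $S$ models $\bot$'' being vacuously true exactly when there are none). Now assume $T$ is not $m$-satisfiable. Then $T\models_m\bot$, and applying Theorem~\ref{entailment compactness henkin ultrametric} with $\chi=\bot$ yields $T\fentail_m\bot$. Unwinding the definition of $\fentail_m$, this produces a finite subset $S\subseteq T$ with $S\models_m\bot$, that is, a finite $S\subseteq T$ possessing no ultrametric model; hence $T$ is not finitely $m$-satisfiable. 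Contraposing gives the remaining implication.

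There is essentially no computation here; the one point worth isolating, and the closest thing to an obstacle, is the vacuous-entailment observation of the second paragraph, namely the equivalences ``$T\models_m\bot$ iff $T$ has no ultrametric model'' and ``$T\fentail_m\bot$ iff some finite subset of $T$ has no ultrametric model.'' Once these are in place, the corollary is a direct transcription of Theorem~\ref{entailment compactness henkin ultrametric} with $\chi$ taken to be $\bot$.
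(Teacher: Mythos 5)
Your proof is correct and is exactly the intended derivation: the paper states this corollary without proof as an immediate consequence of Theorem~\ref{entailment compactness henkin ultrametric}, just as the earlier Compactness Theorem was read off from Theorem~\ref{entailment compactness}, and your instantiation with $\chi=\bot$ together with the vacuous-entailment observation is the standard way to make that ``immediate'' step explicit.
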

\subsection{Basic Notions of Model Theory}\hfill

The definition of elementary equivalent models in classical first-order logic is based on
satisfactory of the same sentences by models. In the case of many-valued logic
the same definition could be chosen.
\begin{defn}
Let $\mathcal{M} = (G,M)$ and $\mathcal{N} = (H,N)$ be two $\tau$-structures.
\begin{enumerate}
\item $\mathcal{M}$ and $\mathcal{N}$ are elementary equivalent,
$\mathcal{M}\equiv\mathcal{N}$, if $Th(\mathcal{M})=Th(\mathcal{N})$.
\item If $A\subseteq M,N$ and $\tau(A)$ be the expansion of $\tau$ by adding some new
constant symbols $c_a$ for each $a\in A$, then $\mathcal{M}$ and $\mathcal{N}$
can be viewed naturally as $\tau(A)$-structures. We say that
$\mathcal{M}$ and $\mathcal{N}$ are elementary equivalent over $A$,
$\mathcal{M}\equiv_{_A}\mathcal{N}$, if $Th_{{\tau(A)}}(\mathcal{M})=Th_{{\tau(A)}}(\mathcal{N})$.
\end{enumerate}
\end{defn}
A structure whose underlying group does not contain any unnecessary element is called an
exhaustive structure. This notion firstly appeared in \cite{hajek2005}.
\begin{defn}
For a $\tau$-structure $\mathcal{M}=(G,M)$ let $Gr(\mathcal{M})$ or $Gr((G,M))$
be the ordered subgroup of truth values of all $\tau$-formulas, i.e.,
\begin{center}
$Gr(\mathcal{M})=\{\varphi^\mathcal{M}(\bar{a}): \varphi\in Form(\tau), \bar{a}\subseteq M\}\setminus\{0,\infty\}$.
\end{center}
$\mathcal{M}=(G,M)$ is called an exhaustive structure if $G=Gr(\mathcal{M})$.
\end{defn}
The definition of elementary embedding is
based on the equality of truth values of formulas \cite{hajek2006theories,benusthenbre08}.
for example if $\mathcal{M}$ and $\mathcal{N}$ are two $\tau$-structures
with the same set of truth values, then $\mathcal{M}$ is elementary embedded in
$\mathcal{N}$ if there is an injection $h:M\to N$ such that
\begin{center}
$\varphi^\mathcal{M}(a_1, a_2, ..., a_n)=\varphi^\mathcal{N}(h(a_1), h(a_2), ..., h(a_n))$
\end{center}
For additive \g logic, we give more suitable definition.
\begin{defn}\label{elementary embedding}
Let $\mathcal{M} = (G,M)$ and $\mathcal{N} = (H,N)$ be $\tau$-structures.
We say that $\mathcal{M}$ is elementary embedded in $\mathcal{N}$, if there are an
injection $h:M\to N$ and a strict order preserving group homeomorphism $T:G\to H$ such that:
\begin{itemize}
  \item $h(f^\mathcal{M}(a_1, ..., a_{n_f}))=f^\mathcal{N}(h(a_1), ..., h(a_{n_f}))$, for all function
  symbols $f\in\tau$ and $\bar{a}\in M^{n_f}$,
  \item $T\big(\varphi^\mathcal{M}(a_1, a_2, ..., a_n)\big)=
  \varphi^\mathcal{N}(h(a_1), h(a_2), ..., h(a_n))$,
  for all $\varphi\in Form(\tau)$ and $\bar{a}\subseteq M$.
\end{itemize}
We call $(h,T):\mathcal{M}\hookrightarrow_\tau\mathcal{N}$ an
elementary embedding from $\mathcal{M}$ into $\mathcal{N}$.
$\mathcal{M}$ and $\mathcal{N}$ are called
isomorphic, $\mathcal{M}\cong\mathcal{N}$, if $T$ is a group isomorphism and
there are two elementary embeddings $(h,T):\mathcal{M}\hookrightarrow_\tau\mathcal{N}$
and $(j,T^{-1}):\mathcal{N}\hookrightarrow_\tau\mathcal{M}$. Obviously, in this
case $h$ is a one-to-one correspondence and we call $(h,T)$ an isomorphism.
\end{defn}
Clearly, the isomorphism relation between $\tau$-structures
is an equivalence relation.
\begin{lem}\label{strong elementary}
Let $\mathcal{M}$ and $\mathcal{N}$ be exhaustive structures and
there is an injection $h:M\to N$ such that
\begin{itemize}
  \item $h(f^\mathcal{M}(a_1, ..., a_{n_f}))=f^\mathcal{N}(h(a_1), ..., h(a_{n_f}))$, for all function
  symbols $f\in\tau$ and $\bar{a}\in M^{n_f}$,
  \item $\mathcal{M}\models\varphi(a_1, a_2, ..., a_n)$ if and only if
  $\mathcal{N}\models\varphi(h(a_1), h(a_2), ..., h(a_n))\big)$,
  for all $\varphi\in Form(\tau)$ and $\bar{a}\subseteq M$.
\end{itemize}
There is a strict order preserving group homeomorphism
$I_{_\mathcal{MN}}:Gr(\mathcal{M})\to Gr(\mathcal{N})$ such that $(h,I_{_\mathcal{MN}})$ is an
elementary embedding from $\mathcal{M}$ into $\mathcal{N}$.
\end{lem}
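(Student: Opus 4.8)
The plan is to define $I_{\mathcal{MN}}$ directly on truth values by the rule
$I_{\mathcal{MN}}\big(\varphi^\mathcal{M}(\bar a)\big)=\varphi^\mathcal{N}(h(\bar a))$, and then check that this rule makes sense and has all the required properties. Since $\mathcal{M}$ is exhaustive, $G=Gr(\mathcal{M})$ and \emph{every} element of $G$ has the form $\varphi^\mathcal{M}(\bar a)$ for some $\varphi\in Form(\tau)$ and $\bar a\subseteq M$; so once well-definedness is established, the rule does specify a total map $G\to H$. I extend it to $\Gamma_G$ by sending $0\mapsto 0$ and $\infty\mapsto\infty$, which is forced if $(h,I_{\mathcal{MN}})$ is to be an elementary embedding. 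A preliminary bookkeeping remark is useful throughout: given two representations $\varphi^\mathcal{M}(\bar a)$ and $\psi^\mathcal{M}(\bar b)$, by padding $\varphi$ and $\psi$ with dummy variables I may assume both are evaluated at one common tuple $\bar c$ listing the entries of $\bar a$ and $\bar b$. This lets me combine the two formulas under a single connective and compare their values in one structure.

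The first and most delicate step is well-definedness. Suppose $\varphi^\mathcal{M}(\bar c)=\psi^\mathcal{M}(\bar c)$ for a common tuple $\bar c$. Because $(\varphi\leftrightarrow\psi)^\mathcal{M}(\bar c)=d_{min}\big(\varphi^\mathcal{M}(\bar c),\psi^\mathcal{M}(\bar c)\big)$, this equality is \emph{equivalent} to $\mathcal{M}\models(\varphi\leftrightarrow\psi)(\bar c)$. The second hypothesis on $h$ transfers this to $\mathcal{N}\models(\varphi\leftrightarrow\psi)(h(\bar c))$, and reading the same truth table in $\mathcal{N}$ forces $\varphi^\mathcal{N}(h(\bar c))=\psi^\mathcal{N}(h(\bar c))$. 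Hence the value assigned to an element of $G$ is independent of its chosen representation. Next I check the target: if $g=\varphi^\mathcal{M}(\bar a)\in Gr(\mathcal{M})$ then $g\neq 0$ and $g\neq\infty$. By the truth table of $\neg\neg$, the condition $g\neq 0$ is the same as $\mathcal{M}\models\neg\neg\varphi(\bar a)$, and by the Remark the condition $g\neq\infty$ is the same as $\mathcal{M}\models\neg\Delta(\varphi)(\bar a)$ (equivalently $\mathcal{M}\models(\varphi\Rrightarrow\top)(\bar a)$). Transferring both across $h$ gives $\varphi^\mathcal{N}(h(\bar a))\notin\{0,\infty\}$, i.e. $I_{\mathcal{MN}}(g)\in Gr(\mathcal{N})=H$.

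It remains to see that $I_{\mathcal{MN}}$ is a strictly order preserving group homomorphism and that $(h,I_{\mathcal{MN}})$ is an elementary embedding. For the homomorphism property, write $g_1=\varphi^\mathcal{M}(\bar c)$ and $g_2=\psi^\mathcal{M}(\bar c)$ on a common tuple; since $(\varphi\otimes\psi)^\mathcal{M}=\varphi^\mathcal{M}*\psi^\mathcal{M}$ and $G$ is a group, $g_1*g_2=(\varphi\otimes\psi)^\mathcal{M}(\bar c)\in G$, so the identical computation in $\mathcal{N}$ yields $I_{\mathcal{MN}}(g_1*g_2)=\varphi^\mathcal{N}(h(\bar c))*\psi^\mathcal{N}(h(\bar c))=I_{\mathcal{MN}}(g_1)*I_{\mathcal{MN}}(g_2)$. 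For strict monotonicity, $g_1<g_2$ is equivalent to $\mathcal{M}\models(\varphi\Rrightarrow\psi)(\bar c)$ by the truth table of $\Rrightarrow$; transferring to $\mathcal{N}$ gives $\varphi^\mathcal{N}(h(\bar c))<\psi^\mathcal{N}(h(\bar c))$, that is $I_{\mathcal{MN}}(g_1)<I_{\mathcal{MN}}(g_2)$. Strict monotonicity on a totally ordered group entails injectivity. Finally both clauses of Definition \ref{elementary embedding} hold: the function-symbol clause is exactly the first hypothesis on $h$, and the formula clause $I_{\mathcal{MN}}\big(\varphi^\mathcal{M}(\bar a)\big)=\varphi^\mathcal{N}(h(\bar a))$ is the definition of $I_{\mathcal{MN}}$ when the value lies in $G$, while for $\varphi^\mathcal{M}(\bar a)=\infty$ (resp. $=0$) it follows by transferring $\mathcal{M}\models\varphi(\bar a)$ (resp. $\mathcal{M}\models\neg\varphi(\bar a)$) to $\mathcal{N}$, matching the extension $\infty\mapsto\infty$, $0\mapsto 0$.

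I expect the main obstacle to be precisely the interaction of the well-definedness step with the parameter bookkeeping: the single hypothesis available is that $h$ preserves \emph{$\infty$-satisfaction} of formulas, so I must express each semantic relation between truth values — equality, strict inequality, and the two conditions ``value $\neq 0$'' and ``value $\neq\infty$'' — as the $\infty$-satisfaction of a suitable formula before I can invoke it, all after first reducing several parameters to one common tuple. This is exactly the place where the added expressive strength of AG$\forall$, through the connectives $\leftrightarrow$, $\otimes$, $\Rrightarrow$, $\neg\neg$ and $\Delta$, is essential; in plain \g logic one could not capture strict inequality or the condition ``value $<\infty$'', and the construction of $I_{\mathcal{MN}}$ would break down.
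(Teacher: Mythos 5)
Your proposal defines exactly the map the paper uses — $I_{\mathcal{MN}}\big(\varphi^\mathcal{M}(\bar a)\big)=\varphi^\mathcal{N}(h(\bar a))$ — which the paper's one-line proof simply asserts ``does the job''; you supply the omitted verification (well-definedness via $\leftrightarrow$, landing in $Gr(\mathcal{N})$ via $\neg\neg$ and $\neg\Delta$, the homomorphism property via $\otimes$, and strict monotonicity via $\Rrightarrow$), and all of these checks are correct. This is the same approach as the paper, just carried out in full.
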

\begin{proof}
Obviously, $I_{_\mathcal{MN}}\big(\varphi^\mathcal{M}(a_1, ...,a_n)\big)=\varphi^\mathcal{N}(h(a_1), ..., h(a_n))$
does the job.
\qed\end{proof}
\begin{rem}
If $\mathcal{M}$ and $\mathcal{N}$ are exhaustive ultrametric $\tau_e$-structures and there
exist a function $h:M\to N$ such that
\begin{quote}
$\mathcal{M}\models\varphi(a_1, a_2, ..., a_n)$ if and only if
$\mathcal{N}\models\varphi(h(a_1), h(a_2), ..., h(a_n))$,
for all $\varphi\in Form(\tau)$ and $\bar{a}\subseteq M$,
\end{quote}
then one could easily see that $(h,I_{_\mathcal{MN}})$ is an elementary embedding.
\end{rem}
One of the nice properties of model theory of first-order logic is
"amalgamating many structures into one structure". To study this property in
additive \g logic, as in classical first-order logic, we need the method of diagram.
\begin{defn}
Let $\mathcal{M}=(G,M)$ be a $\tau$-structure. The \emph{elementary diagram}
of $\mathcal{M}$ is
\begin{center}
$ediag_\tau(\mathcal{M})=Th_{\tau(M)}(\mathcal{M})$.
\end{center}
We may write $ediag(\mathcal{M})$ when there is no danger of confusion
about the underlying language.
\end{defn}
An important property of elementary diagram in classical first-order logic
is describing the structure, i.e., if $\mathcal{M}$ be a $\tau$-structure and $\mathcal{N}$ be
a $\tau(M)$-structure such that $\mathcal{N}\models
ediag(\mathcal{M})$, then there is an elementary embedding $j:\mathcal{M}\hookrightarrow_\tau\mathcal{N}$.

Below, we show that the elementary diagram of an exhaustive ultrametric structure, fully describe
the structure.
\begin{lem}\label{tozih}
Let $\mathcal{M}=(G,M)$ be an exhaustive ultrametric $\tau_e$-structure. Suppose
for some exhaustive ultrametric $\tau_e(M)$-structure $\mathcal{N}=(H,N)$,
$\mathcal{N}\models ediag(\mathcal{M})$. Then, there is a $\tau_e$ elementary embedding
from $\mathcal{M}$ into $\mathcal{N}$.
\end{lem}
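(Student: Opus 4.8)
The plan is to take for $h$ the obvious map sending each $a\in M$ to the interpretation $c_a^{\mathcal{N}}$ of its constant symbol in the $\tau_e(M)$-structure $\mathcal{N}$, and then to invoke Lemma \ref{strong elementary} (or the remark following it) to manufacture the accompanying group homomorphism $T$. The whole problem thereby reduces to a single point: the equivalence
\[
\mathcal{M}\models\varphi(a_1,\ldots,a_n)\iff\mathcal{N}\models\varphi(h(a_1),\ldots,h(a_n))
\]
for every $\varphi\in Form(\tau_e)$ and every tuple $\bar a\subseteq M$, where on the right the constants $c_{a_i}$ are interpreted by $h$. Once this biconditional is in hand, injectivity of $h$ and the commutation $h(f^\mathcal{M}(\bar a))=f^\mathcal{N}(h(\bar a))$ both drop out by instantiating it at the formulas $e(x,y)$ and $e(f(\bar x),y)$ and using that $\mathcal{M},\mathcal{N}$ are ultrametric (so $e$ evaluates to $\infty$ exactly on the diagonal); the hypotheses of Lemma \ref{strong elementary} are then met, and $(h,I_{_\mathcal{MN}})$ is the desired elementary embedding.

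It remains to prove the biconditional. The forward direction is immediate: if $\varphi^\mathcal{M}(\bar a)=\infty$, then the $\tau_e(M)$-sentence $\varphi(\bar a)$ lies in $Th_{\tau_e(M)}(\mathcal{M})=ediag(\mathcal{M})$, and since $\mathcal{N}\models ediag(\mathcal{M})$ we get $\varphi^\mathcal{N}(h(\bar a))=\infty$. The converse is the crux, and this is exactly where the extra expressive power of $\mathrm{AG}\forall$ enters. Suppose $\varphi^\mathcal{N}(h(\bar a))=\infty$ but, for contradiction, $\varphi^\mathcal{M}(\bar a)<\infty$. Using the connective $\Rrightarrow$ we can \emph{name} this strict inequality: by its truth table $(\varphi\Rrightarrow\top)^\mathcal{M}(\bar a)=\infty$, so $\varphi(\bar a)\Rrightarrow\top\in ediag(\mathcal{M})$, whence $\mathcal{N}\models\varphi(\bar a)\Rrightarrow\top$ and therefore $\varphi^\mathcal{N}(h(\bar a))<\infty$ --- contradicting our assumption. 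Hence $\varphi^\mathcal{M}(\bar a)=\infty$, which closes the converse.

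The main obstacle is precisely this converse direction, and it is worth stressing why it goes through here. In ordinary first-order G\"odel or \L{}ukasiewicz logic the elementary diagram records only which sentences are \emph{absolutely true}, and from $\mathcal{N}\models\varphi(h(\bar a))$ one cannot generally recover $\mathcal{M}\models\varphi(\bar a)$, since the diagram has no way to assert $\varphi^\mathcal{M}(\bar a)<\infty$. The connective $\Rrightarrow$ (equivalently $\neg\Delta$) removes this defect by converting the \emph{non}-truth of $\varphi$ into the truth of $\varphi\Rrightarrow\top$, so that the diagram now controls both $\{\,\varphi:\varphi^\mathcal{M}=\infty\,\}$ and its complement. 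Finally I note that exhaustiveness of $\mathcal{M}$ is what guarantees that the map $I_{_\mathcal{MN}}$ supplied by Lemma \ref{strong elementary} is defined on all of $G=Gr(\mathcal{M})$: every element of $G$ has the form $\varphi^\mathcal{M}(\bar a)$, and the same family of connectives --- $\leftrightarrow$ for well-definedness, $\otimes$ for the homomorphism property, and $\Rrightarrow$ for strict monotonicity --- transports each defining relation across the diagram, so that $T=I_{_\mathcal{MN}}$ is a well-defined strict order preserving group homomorphism carrying $\varphi^\mathcal{M}(\bar a)$ to $\varphi^\mathcal{N}(h(\bar a))$.
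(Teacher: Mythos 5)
Your proposal is correct and follows essentially the same route as the paper: take $h(a)=c_a^{\mathcal N}$, use the diagram for the forward implication, use the ability of $\mathrm{AG}\forall$ to express non-truth (you via $\varphi\Rrightarrow\top$, the paper via the equivalent $\neg\Delta(\varphi)$) for the converse, extract injectivity and commutation with function symbols from the $e$-formulas plus ultrametricity, and finish with Lemma \ref{strong elementary}. The only differences are organizational (you derive injectivity and the function-symbol condition from the biconditional rather than proving them separately), so there is nothing to add.
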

\begin{proof}
Define $j:M\to N$ by $j(m)=m^\mathcal{N}$. Obviously, $j$ is an injection. Indeed, if
$a$ and $b$ are two distinct element of $M$, then
$(\bot\Rrightarrow d(a,b))\in ediag(\mathcal{M})$. Thus,
$\mathcal{N}\models \bot\Rrightarrow d(a,b)$. So, $d^\mathcal{N}(a^\mathcal{N},b^\mathcal{N})>0$,
i.e., $j(a)\ne j(b)$.

On the other hand, if for some n-ary function symbol $f$ and element $b\in M$,
$f^\mathcal{M}(a_1, ..., a_n)=b$, then $d(f(a_1, ..., a_n),b)\in ediag(\mathcal{M})$.
So, $\mathcal{N}\models d(f(a_1, ..., a_n),b)$, that is
\begin{center}
$f^\mathcal{N}(j(a_1), ..., j(a_n))=f^\mathcal{N}(a_1^\mathcal{N}, ..., a_n^\mathcal{N})
=b^\mathcal{N}=j(b)=j(f^\mathcal{M}(a_1, ..., a_n))$.
\end{center}
Furthermore, if $\mathcal{M}\models\varphi(a_1, ..., a_n)$ for a $\tau_e$-formula
$\varphi(x_1, ..., x_n)$ and $\bar{a}\in M^n$, then $\varphi(a_1, ..., a_n)\in ediag(\mathcal{M})$. So,
$\mathcal{N}\models\varphi(j(a_1), ..., j(a_n))$. Conversely, if
$\mathcal{N}\models\varphi(j(a_1), ..., j(a_n))$ for a $\tau_e$-formula
$\varphi(x_1, ..., x_n)$ and $\bar{a}\in M^n$, then
$\mathcal{M}\models\varphi(a_1, ..., a_n)$, since otherwise
$\neg\Delta\big(\varphi(a_1, ..., a_n)\big)\in ediag(\mathcal{M})$. But,
this contradicts with $\mathcal{N}\models\varphi(j(a_1), ..., j(a_n))$.
Now, by Lemma \ref{strong elementary} $(j,I_{_\mathcal{MN}})$ is
the desirable elementary embedding.
\qed\end{proof}
Now, we prove elementary amalgamation over ultrametric structures.
\begin{thm}\label{amalgamation th}
Let $\mathcal{A}=(G_A,A)$, $\mathcal{B}=(G_B,B)$ and $\mathcal{M}=(G_M,M)$ be three
exhaustive ultrametric $\tau_e$-structures. Suppose also,
$(j,I_{_\mathcal{MA}}):\mathcal{M}\hookrightarrow_{\tau_e}\mathcal{A}$ and
$(k,I_{_\mathcal{MB}}):\mathcal{M}\hookrightarrow_{\tau_e}\mathcal{B}$ are elementary embeddings.
Then, there are exhaustive ultrametric $\tau_e$-structure $\mathcal{N}=(G_N,N)$ and
elementary embeddings $(j_1,I_{_\mathcal{AN}}):\mathcal{A}\hookrightarrow_{\tau_e}\mathcal{N}$ and
$(k_1,I_{_\mathcal{BN}}):\mathcal{B}\hookrightarrow_{\tau_e}\mathcal{N}$ such that $j_1\circ j=k_1\circ k$.
\end{thm}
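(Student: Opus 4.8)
The plan is to adapt the classical proof of elementary amalgamation: build a single structure satisfying the elementary diagrams of both $\mathcal{A}$ and $\mathcal{B}$ with their copies of $\mathcal{M}$ identified, using the ultrametric compactness theorem (Corollary \ref{compactness ultrametric}) to produce it and the diagram lemma (Lemma \ref{tozih}) to read off the two embeddings. Concretely, let $\tau_e^{*}$ extend $\tau_e$ by a constant $c_a$ for every $a\in A$ and $c_b$ for every $b\in B$, subject to the identification $c_{j(m)}=c_{k(m)}$ for all $m\in M$, and set $\Sigma=ediag(\mathcal{A})\cup ediag(\mathcal{B})$, viewed as a $\tau_e^{*}$-theory. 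If $\Sigma$ has an exhaustive ultrametric model $\mathcal{N}$, then $\mathcal{N}\models ediag(\mathcal{A})$ and $\mathcal{N}\models ediag(\mathcal{B})$, so Lemma \ref{tozih} supplies elementary embeddings $(j_1,I_{_\mathcal{AN}})\colon\mathcal{A}\hookrightarrow_{\tau_e}\mathcal{N}$ and $(k_1,I_{_\mathcal{BN}})\colon\mathcal{B}\hookrightarrow_{\tau_e}\mathcal{N}$ with $j_1(a)=(c_a)^{\mathcal{N}}$ and $k_1(b)=(c_b)^{\mathcal{N}}$; the identification $c_{j(m)}=c_{k(m)}$ then forces $j_1(j(m))=k_1(k(m))$, that is $j_1\circ j=k_1\circ k$, as required.

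Thus the theorem reduces to showing that $\Sigma$ is $m$-satisfiable, and by Corollary \ref{compactness ultrametric} it is enough to verify that $\Sigma$ is finitely $m$-satisfiable. Given a finite subset, let $\alpha$ be the conjunction of its members lying in $ediag(\mathcal{A})$ and $\beta$ the conjunction of those lying in $ediag(\mathcal{B})$; since $\wedge$ is interpreted by $\min$, we have $\alpha\in ediag(\mathcal{A})$ and $\beta\in ediag(\mathcal{B})$, and it suffices to find an ultrametric model of $\{\alpha,\beta\}$. Write $\alpha=\alpha(\bar a,\bar m)$ and $\beta=\beta(\bar b,\bar m)$, where $\bar a$ lists the constants $c_a$ with $a\notin j(M)$ occurring in $\alpha$, $\bar b$ the constants $c_b$ with $b\notin k(M)$ occurring in $\beta$, and $\bar m$ the shared constants occurring in either; after replacing $\bar a$ by fresh variables $\bar x$, the expression $\alpha(\bar x,\bar m)$ becomes a genuine $\tau_e$-formula evaluated at the elements of $M$ named by $\bar m$.

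The crucial step is to build an ultrametric model of $\{\alpha,\beta\}$, and here the naive classical move — transfer the existential $\exists\bar x\,\alpha(\bar x,\bar m)$ from $\mathcal{A}$ to $\mathcal{B}$ through $\mathcal{M}$ and then grab a witness — fails, because in this logic $\exists$ is a supremum that need not be attained. The device that repairs it is the operator $\Delta$. From $\mathcal{A}\models\alpha(\bar a,j\bar m)$ we get $\big(\exists\bar x\,\Delta\alpha(\bar x,\bar m)\big)^{\mathcal{A}}(j\bar m)=\infty$, witnessed by $\bar a$. A strict order preserving group homeomorphism sends $\infty$ to $\infty$ and no other element to $\infty$, so the value $\infty$ is both preserved and reflected along the elementary embeddings $(j,I_{_\mathcal{MA}})$ and $(k,I_{_\mathcal{MB}})$; reflecting to $\mathcal{M}$ and then pushing to $\mathcal{B}$ we obtain $\big(\exists\bar x\,\Delta\alpha(\bar x,\bar m)\big)^{\mathcal{B}}(k\bar m)=\infty$. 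Now because $\Delta\alpha$ takes only the values $0$ and $\infty$, a supremum of such values equals $\infty$ only if it is attained, so there is an actual tuple $\bar b'\in B$ with $\alpha^{\mathcal{B}}(\bar b',k\bar m)=\infty$. Expanding $\mathcal{B}$ to a $\tau_e^{*}$-structure by interpreting $c_b$ as $b$, each $c_m$ as $k(m)$, and each non-shared $c_a$ as the matching coordinate of $\bar b'$ yields an ultrametric structure in which both $\alpha$ and $\beta$ evaluate to $\infty$. Hence $\{\alpha,\beta\}$, and therefore every finite subset of $\Sigma$, is $m$-satisfiable.

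Finally, $m$-satisfiability of $\Sigma$ produces an ultrametric model, and passing to its exhaustive reduct (replacing the value group by $Gr(\mathcal{N})$, which leaves every truth value unchanged) — or, equivalently, taking the exhaustive canonical model furnished by the construction in Theorem \ref{entailment compactness henkin ultrametric} — gives the exhaustive ultrametric $\mathcal{N}\models\Sigma$ needed above; applying Lemma \ref{tozih} on each side then finishes the argument. The main obstacle is precisely the non-attainment of suprema just discussed, which breaks the usual witness-extraction; once it is circumvented by the two-valuedness of $\Delta$, the remaining points are the routine verifications that the expansions are ultrametric and that the hypotheses of Lemma \ref{tozih} hold for $\mathcal{A}$ and for $\mathcal{B}$.
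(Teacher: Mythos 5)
Your proof is correct and follows essentially the same route as the paper: reduce to finite $m$-satisfiability of $ediag(\mathcal{A})\cup ediag(\mathcal{B})$, transfer the existential closure of a diagram sentence through $\mathcal{M}$ into $\mathcal{B}$, and recover an actual witness despite non-attained suprema by first passing to a $\{0,\infty\}$-valued formula. The paper uses $\varphi\wedge(\varphi^{-1}\to\bot)$ where you use $\Delta\alpha$, but these have identical semantics, so the key device is the same.
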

\begin{proof}
Let $\tau_A=\tau_e(M)\cup\{c_a: a\in A\setminus j(M)\}$,
$\tau_B=\tau_e(M)\cup\{c_b: b\in B\setminus j(M)\}$ and
$\tau'=\tau_A\cup\tau_B$. Without loss of
generality, we may assume that
$\tau_A\cap\tau_B=\tau_e(M)$. One can naturally
interpret the new constants $c_a$ and $c_m$, for $a\in A$ and $m\in M$
inside the ultrametric $\tau_A$-structure $\mathcal{A}$ by $a$ and
$j(m)$, respectively. Similarly, for $b\in A$ and $m\in M$
interpret $c_b$ and $c_m$ inside $\mathcal{B}$ by $b$ and $k(m)$, respectively.
We want to show that
$ediag(\mathcal{A})\cup ediag(\mathcal{B})$ is an $m$-satisfiable
$\tau'$-theory.

For a given $\varphi(c_{a_1}, ..., c_{a_i}, c_{m_1}, ...,c_{m_j})\in
ediag(\mathcal{A})$, we have
\begin{center}
$\varphi^\mathcal{A}(c_{a_1}, ..., c_{a_i}, c_{m_1}, ...,c_{m_j})=\infty$, and
$(\varphi^{-1})^\mathcal{A}(c_{a_1}, ..., c_{a_i}, c_{m_1}, ...,c_{m_j})=0$.
\end{center}
Thus,
\begin{center}
$\mathcal{A}\models\exists\bar{x}\left
(\varphi(\bar{x}, c_{m_1}, ..., c_{m_j})
\wedge(\varphi^{-1}(\bar{x}, c_{m_1}, ..., c_{m_j})\to\bot)\right)$.
\end{center}
Now, since $j:\mathcal{M}\hookrightarrow\mathcal{A}$ and
$k:\mathcal{M}\hookrightarrow\mathcal{B}$ are elementary embeddings, we have
\begin{center}
$\mathcal{B}\models\exists\bar{x}\left
(\varphi(\bar{x}, c_{m_1}, ..., c_{m_j})
\wedge(\varphi^{-1}(\bar{x}, c_{m_1}, ..., c_{m_j})\to\bot)\right)$.
\end{center}
Hence,
$\sup_{\bar{b}\in B^i}\Big(\varphi^\mathcal{B}(\bar{b}, c_{m_1}, ..., c_{m_j})
\dota\left((\varphi^{-1})^\mathcal{B}(\bar{b}, c_{m_1}, ..., c_{m_j})\dotto 0\right)\Big)=\infty$,
i.e., for any $g\in G_B$
there exists an $i$-tuple $\bar{b}\in B^i$ such that
\begin{center}
$\varphi^\mathcal{B}(\bar{b}, c_{m_1}, ..., c_{m_j})\ge g$ and
$\left((\varphi^{-1})^\mathcal{B}(\bar{b}, c_{m_1}, ..., c_{m_j})\dotto 0\right)\ge g$.
\end{center}
So, for some $i$-tuple $\bar{b}\in B^i$, $(\varphi^{-1})^\mathcal{B}(\bar{b}, c_{m_1}, ..., c_{m_j})=0$.
Whence, by definition of $\infty$,
\begin{center}
$\varphi^\mathcal{B}(\bar{b}, c_{m_1}, ..., c_{m_j})=\infty$.
\end{center}
Thus,
\begin{center}
$\mathcal{B}\models ediag(\mathcal{B})\cup\{\varphi(c_{a_1}, ..., c_{a_i}, c_{m_1}, ...,c_{m_j})\}$
\end{center}
where $\varphi(c_{a_1}, ..., c_{a_i}, c_{m_1}, ...,c_{m_j})$ is an arbitrary element of $ediag(\mathcal{A})$.
A similar argument shows that $ediag(\mathcal{B})\cup ediag(\mathcal{A})$ is finitely $m$-satisfiable. So,
by compactness theorem
$ediag(\mathcal{B})\cup ediag(\mathcal{A})$ is $m$-satisfiable. Now,
any exhaustive ultrametric model
$\mathcal{N}=(G_N,N)\models ediag(\mathcal{A})\cup ediag(\mathcal{B})$
fulfills the requirement.
\qed\end{proof}
\begin{defn}
A $\tau^G$-structure $\mathcal{M}$ is called an elementary substructure of a
$\tau^H$-structure $\mathcal{N}$ (or $\mathcal{N}$ is an elementary
extension of $\mathcal{M}$) if $M\subseteq N$ and the inclusion map from
$M$ into $N$ together with $I_{_\mathcal{MN}}$ be an elementary
embedding. We denote this by $\mathcal{M}\prec\mathcal{N}$.
\end{defn}
Bellow, we see that the class of exhaustive structures is closed
under the union of elementary chains.
\begin{thm}\label{union of chain}
Let $\{\mathcal{M}_i\}_{i=0}^\infty$ be a sequence of exhaustive
$\tau$-structures such that $\mathcal{M}_i\prec \mathcal{M}_{i+1}$
for each $i\ge 1$. There exists a unique $\tau$-structure $\mathcal{M}$
with the underlying universe $M=\cup_{i=1}^\infty M_i$ such that $\mathcal{M}_i\prec\mathcal{M}$
for each $i\ge 1$.
\end{thm}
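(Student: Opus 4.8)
The plan is to realize $\mathcal{M}$ as a direct limit. First I would fix the notation $G_i=Gr(\mathcal{M}_i)$ (so that $G_i$ is the underlying truth-value group, since each $\mathcal{M}_i$ is exhaustive) and write $I_i\colon G_i\to G_{i+1}$ for the strict order-preserving group homomorphisms supplied by the relations $\mathcal{M}_i\prec\mathcal{M}_{i+1}$. These form a directed system of totally ordered Abelian groups with order-embeddings, so I would set $G=\varinjlim_i G_i$ and check that $G$ is again a totally ordered Abelian group: every element is $\iota_k(g)$ for some $k$ and $g\in G_k$, where $\iota_k\colon G_k\to G$ is the (injective) canonical map, and one orders $G$ by pulling back the orders of the $G_k$ along the $\iota_k$, which is consistent precisely because the connecting maps are order embeddings. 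Extending each $\iota_k$ to $\Gamma_{G_k}\to\Gamma_G$ by sending $0\mapsto0$ and $\infty\mapsto\infty$, I would then define $\mathcal{M}=(G,M)$ on $M=\bigcup_i M_i$ by setting $f^{\mathcal{M}}(\bar a)=f^{\mathcal{M}_i}(\bar a)$ and $P^{\mathcal{M}}(\bar a)=\iota_i\big(P^{\mathcal{M}_i}(\bar a)\big)$ whenever $\bar a$ lies in $M_i$. These are well defined because the inclusions are parts of elementary embeddings, so the interpretations computed at different stages cohere under the $\iota$'s.

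The heart of the argument is the coherence lemma: for every formula $\varphi$, every $i$, and every $\bar a\in M_i$,
\[\varphi^{\mathcal{M}}(\bar a)=\iota_i\big(\varphi^{\mathcal{M}_i}(\bar a)\big),\]
which I would prove by induction on $\varphi$. The atomic and connective cases are routine, since each $\iota_i$ is simultaneously a group homomorphism (handling $\otimes$ and $^{-1}$) and a \emph{strict} order embedding (handling $\wedge$ and the residuum $\to$, where strictness is exactly what is needed to preserve the case split defining the interpretation of $\to$). The main obstacle is the quantifier case, which is a Tarski--Vaught argument in the many-valued setting. Writing $v_j=(\forall y\,\psi)^{\mathcal{M}_j}(\bar a)$ for $j\ge i$, the elementary embeddings give $\iota_j(v_j)=\iota_i(v_i)$ for all $j\ge i$, and I would show $\iota_i(v_i)=\inf_{b\in M}\psi^{\mathcal{M}}(b,\bar a)$. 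It is a lower bound because $v_j\le\psi^{\mathcal{M}_j}(b,\bar a)$ for $b\in M_j$ and $\iota_j$ is order preserving. For the greatest-lower-bound property, any $w\in\Gamma_G$ with $w\le\psi^{\mathcal{M}}(b,\bar a)$ for all $b\in M$ is either one of $0,\infty$ (where the claim is immediate) or of the form $\iota_k(w')$ with $k\ge i$; restricting attention to witnesses $b\in M_k$ and using that $\iota_k$ reflects the order yields $w'\le\inf_{b\in M_k}\psi^{\mathcal{M}_k}(b,\bar a)=v_k$, hence $w=\iota_k(w')\le\iota_k(v_k)=\iota_i(v_i)$. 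The decisive point is that every element of $G$ is already visible at some finite stage $G_k$, so passing to the union creates no new infima; the $\exists$-case is dual.

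This induction simultaneously establishes that all the relevant suprema and infima exist, so $\mathcal{M}$ is safe, and the lemma itself says precisely that $(\mathrm{incl}_i,\iota_i)$ is an elementary embedding, i.e.\ $\mathcal{M}_i\prec\mathcal{M}$ for every $i$. Exhaustiveness of $\mathcal{M}$ is then immediate, since $Gr(\mathcal{M})=\bigcup_i\iota_i(G_i)=G$ by the lemma together with the exhaustiveness of the $\mathcal{M}_i$. For uniqueness I would argue that any structure $\mathcal{M}'$ on the universe $\bigcup_i M_i$ with $\mathcal{M}_i\prec\mathcal{M}'$ for all $i$ must interpret the function symbols exactly as $\mathcal{M}$ does (the embeddings act as inclusions on universes), while its truth-value group together with the accompanying maps $I_{\mathcal{M}_i\mathcal{M}'}$ forms a cocone over the directed system; the universal property of $\varinjlim_i G_i$ then yields a canonical isomorphism identifying $\mathcal{M}'$ with $\mathcal{M}$. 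I expect the only genuinely delicate point of the whole proof to be the quantifier step above, as the remaining verifications are essentially bookkeeping once the direct limit is in place.
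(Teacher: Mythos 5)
Your proof is correct, but it takes a genuinely different route from the paper's. The paper does not build the union structure by hand: it expands the language with constants for all elements of the $M_i$, forms $\Sigma=\bigcup_i Th_{\tau_i}(\mathcal{M}_i)$, checks that $\Sigma$ is linear complete and Henkin, and then takes the canonical (Lindenbaum) model $(G_{Lind(\Sigma)},CM(\Sigma))$ supplied by the Henkin construction of Theorem \ref{entailment compactness henkin}, identifying its universe of closed terms with $\bigcup_i M_i$; the maps $\varphi^{\mathcal{M}_j}(m_1,\dots,m_n)\mapsto[\varphi(c_{m_1},\dots,c_{m_n})]$ then witness $\mathcal{M}_j\prec\mathcal{M}$. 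You instead realize the truth-value group as the direct limit $\varinjlim_i Gr(\mathcal{M}_i)$ and prove the coherence identity $\varphi^{\mathcal{M}}(\bar a)=\iota_i\big(\varphi^{\mathcal{M}_i}(\bar a)\big)$ by induction on formulas, with a Tarski--Vaught argument at the quantifier step. What your route buys is self-containedness and transparency: it invokes neither compactness nor the Henkin machinery, it exhibits $G$ explicitly, and it genuinely verifies that the union is safe (all needed infima and suprema exist) --- a point the paper gets for free from the canonical-model construction but never isolates; your key observation that every element of $G$ is already visible at a finite stage, so no new bounds must be checked, is exactly the right one. What the paper's route buys is brevity, since the heavy lifting was already done in the compactness section. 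One small caveat on your uniqueness step: the universal property of the direct limit only gives a canonical homomorphism from $G$ to the truth-value group of $\mathcal{M}'$; to upgrade it to an isomorphism you need $\mathcal{M}'$ to be exhaustive, so that its group equals $\bigcup_i I_{\mathcal{M}_i\mathcal{M}'}(Gr(\mathcal{M}_i))$. The paper's own uniqueness argument is likewise restricted to exhaustive structures, so this is a matter of making an implicit hypothesis explicit rather than a gap.
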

\begin{proof}
Let $\tau_1=\tau\cup\{c_m: m\in M_1\}$. For each $i\ge 2$ set
$\tau_i=\tau_{i-1}\cup\{c_m: m\in M_i\setminus M_{i-1}\}$ and put
$\tau_\infty=\cup_{i=1}^\infty\tau_i$. Consider
$\mathcal{M}_i$ as a $\tau_i$-structure by interpreting each $c_m\in\tau_i$
by $m$, itself. Fix $E_i=Th_{\tau_i}(\mathcal{M}_i)$.

Obviously, $\Sigma=\cup_{i=1}^\infty E_i$ is finitely satisfiable. Thus,
by the compactness theorem, $\Sigma$ is satisfiable.
We show that there is an exhaustive model $\mathcal{M}$ of $\Sigma$ such that
its underlying universe is $\cup_{i=1}^\infty M_i$ and for each $i\ge 1$,
$\mathcal{M}_i\prec\mathcal{M}$. To this end, we prove the followings:
\begin{enumerate}
\item $\Sigma$ is a linear complete Henkin theory.
\item The underlying universe of the canonical model
$(G_{Lind(\Sigma)},CM(\Sigma))$ of $\Sigma$ is identical to $\cup_{i=1}^\infty M_i$.
\end{enumerate}

The linear completeness is obvious. Now,
for every $\tau_\infty$-formula $\varphi(x,\bar{c})$, assume that $n_\varphi$ be
the least natural number such that $\varphi(x,\bar{c})\in Form(\tau_{n_\varphi})$.
If $\Sigma\nfentail \forall x\,\varphi(x,\bar{c})$ then
$\forall x\,\varphi(x,\bar{c})\notin\Sigma$ and consequently
$\forall x\,\varphi(x,\bar{c})\notin E_{n_\varphi}$. Thus
$(\forall x\,\varphi(x,\bar{c}))^{\mathcal{M}_{n_\varphi}}<\infty$ which
implies that there exists an element $b\in M_{n_\varphi}$ such that
$\varphi^{\mathcal{M}_{n_\varphi}}(b,\bar{c}^{\mathcal{M}_{n_\varphi}})<\infty$.
This implies, $\neg\Delta\big(\varphi(c_b,\bar{c})\big)\in E_{n_\varphi}\subseteq\Sigma$ and
therefore $\Sigma\nfentail \varphi(c_b,\bar{c})$. It follows
that $\Sigma$ is Henkin.

On the other hand, in the light of
Theorem \ref{entailment compactness henkin}
the underlying universe of the canonical model of $\Sigma$ is the set of
closed $\tau_\infty$-terms, which can be easily seen
that it is identical to $\cup_{i=1}^\infty M_i$.

Now, as for each $j\ge 1$, $\mathcal{M}_j$ is exhaustive, the function
$T: Gr(\mathcal{M}_j)\to G_{Lind(\Sigma)}$ defined by
$T(\varphi^{\mathcal{M}_j}(m_1, ..., m_n))=[\varphi(c_{m_1}, ..., c_{m_n}]$ is
a well-defined strict order preserving group homeomorphism . Thus, if
$i$ is the inclusion map from $M_j$ into $\cup_{i=1}^\infty M_i$, then
$(i,T)$ is an elementary embedding from $\mathcal{M}_j$ into $\mathcal{M}$, that is
$\mathcal{M}_j\prec\mathcal{M}$.

Finally, if $\mathcal{P}$ is another exhaustive $\tau$-structure with the
same underlying universe $\cup_{i=1}^\infty M_i$ such that
for each $i\ge 1$, $\mathcal{M}_i\prec\mathcal{P}$, then a
straightforward argument as above paragraph show that $\mathcal{P}\cong\mathcal{M}$.
\qed\end{proof}
The model $(G_{Lind(\Sigma)},\cup_{i=1}^\infty M_i)$ is denoted by $\bigcup_{i=1}^\infty \mathcal{M}_i$.
\begin{lem}
Let $\mathcal{M}_1\prec\mathcal{M}_2\prec...$ be a sequence of exhaustive $\tau$-structures,
$\mathcal{N}=(N,G_\mathcal{N})$ be an exhaustive $\tau$-structure, and
$\mathcal{M}_i\prec\mathcal{N}$, for all $i\ge 1$. Then,
$\bigcup_{i=1}^\infty\mathcal{M}_i\prec\mathcal{N}$.
\end{lem}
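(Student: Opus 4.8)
The plan is to reduce the statement to Lemma \ref{strong elementary} via the standard elementary-chain argument. Write $\mathcal{M}=\bigcup_{i=1}^\infty\mathcal{M}_i=(G_{Lind(\Sigma)},M)$ for the union supplied by Theorem \ref{union of chain}, so that $M=\bigcup_{i=1}^\infty M_i$ and $\mathcal{M}_i\prec\mathcal{M}$ for every $i\ge 1$. Since each relation $\mathcal{M}_i\prec\mathcal{N}$ entails $M_i\subseteq N$ and the chain is increasing, we obtain $M=\bigcup_{i=1}^\infty M_i\subseteq N$, so the inclusion map $h\colon M\hookrightarrow N$ is available and is clearly injective. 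Both $\mathcal{M}$ and $\mathcal{N}$ are exhaustive (the former by Theorem \ref{union of chain}, the latter by hypothesis), so once the two hypotheses of Lemma \ref{strong elementary} are checked for $h$, that lemma produces a strict order preserving group homomorphism $I_{_\mathcal{MN}}$ making $(h,I_{_\mathcal{MN}})$ an elementary embedding; as $h$ is the inclusion, this is precisely $\mathcal{M}\prec\mathcal{N}$.

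The observation driving both verifications is that any finite tuple $\bar{a}$ from $M=\bigcup_i M_i$ already lies in a single $M_i$: each entry belongs to some $M_{i_k}$, and since the chain is increasing all entries lie in $M_i$ for $i=\max_k i_k$. Fixing such an $i$ for a given $\bar{a}$, I would first verify function preservation: for any function symbol $f\in\tau$ and $\bar{a}\in M_i^{n_f}$, the embedding $\mathcal{M}_i\prec\mathcal{M}$ gives $f^\mathcal{M}(\bar{a})=f^{\mathcal{M}_i}(\bar{a})$ and the embedding $\mathcal{M}_i\prec\mathcal{N}$ gives $f^\mathcal{N}(\bar{a})=f^{\mathcal{M}_i}(\bar{a})$, whence $h(f^\mathcal{M}(\bar{a}))=f^\mathcal{N}(h(\bar{a}))$, the first hypothesis of Lemma \ref{strong elementary}.

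For the second hypothesis I would use that an elementary embedding preserves the satisfaction relation: its order component is a strict order preserving homomorphism fixing $0$ and $\infty$, so $\mathcal{M}_i\models\psi(\bar{a})$ (that is, $\psi^{\mathcal{M}_i}(\bar{a})=\infty$) holds exactly when the image truth value equals $\infty$. Applying this to $\mathcal{M}_i\prec\mathcal{M}$ and to $\mathcal{M}_i\prec\mathcal{N}$ yields, for every $\tau$-formula $\varphi$ and every $\bar{a}\in M_i^n$,
\[
\mathcal{M}\models\varphi(\bar{a})\iff\mathcal{M}_i\models\varphi(\bar{a})\iff\mathcal{N}\models\varphi(\bar{a}).
\]
Since $h$ is the inclusion, $h(\bar{a})=\bar{a}$, so this is exactly the biconditional $\mathcal{M}\models\varphi(\bar{a})\iff\mathcal{N}\models\varphi(h(\bar{a}))$ required by Lemma \ref{strong elementary}. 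With both hypotheses established the lemma applies and supplies the map $I_{_\mathcal{MN}}$, finishing the argument.

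I do not expect a genuine obstacle here; the proof is essentially bookkeeping. The only point deserving care is the reduction of an arbitrary finite tuple of $M$ to a single member $\mathcal{M}_i$ of the chain, which is precisely what lets the data of $\mathcal{M}_i\prec\mathcal{M}$ and $\mathcal{M}_i\prec\mathcal{N}$ be transferred to the inclusion $h$. The exhaustiveness hypothesis on all structures involved is what makes Lemma \ref{strong elementary} applicable and yields the group homomorphism $I_{_\mathcal{MN}}$ automatically, rather than having to construct it by hand.
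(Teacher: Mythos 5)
Your proof is correct and follows essentially the same route as the paper: both arguments reduce every finite tuple to a single member $\mathcal{M}_i$ of the chain and assemble the group component of the embedding from the data of $\mathcal{M}_i\prec\mathcal{N}$; the only difference is that the paper writes the homomorphism $T$ explicitly on Lindenbaum classes via the maps $T_j$, whereas you obtain the same map by verifying the hypotheses of Lemma \ref{strong elementary}, which is a clean and legitimate packaging of the identical verification.
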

\begin{proof}
Assume that $i$ is the inclusion map. For each $j\ge 1$, let
$(i,T_j)$ be an elementary embedding from $\mathcal{M}_i$
into $\mathcal{N}$. Define $T:G_{Lind(\Sigma)}\to G_\mathcal{N}$ by
$T\big([\varphi(c_{m_1}, ..., c_{m_n})]\big)=T_{n_\varphi}(m_1, ..., m_n)$ where
$n_\varphi$ is introduced in Theorem \ref{union of chain}.
One could easily verify that $T$ is a well-defined strict order preserving
group homeomorphism. Now $(i,T)$ is an elementary embedding from
$\bigcup_{i=1}^\infty\mathcal{M}_i$ into $\mathcal{N}$.
\qed\end{proof}


\bibliographystyle{spmpsci}      

\bibliography{amin}

%
%

\end{document}